\newtheorem{thm}{Theorem}[section]
\newtheorem{defin}[thm]{Definition}
\newtheorem{lem}[thm]{Lemma}
\newtheorem{prop}[thm]{Proposition}
\newtheorem{rem}[thm]{Remark}
\newtheorem{coro}[thm]{\bf Corollary}
\theoremstyle{remark}
\theoremstyle{plain}
\renewcommand{\P}{{\mathbb P}}
\begin{document}
\begin{abstract}
We prove that the Poisson boundary of a random walk with finite entropy on a non-elementary hyperbolic group can be 
identified with its hyperbolic boundary, without assuming any moment condition on the measure.

We also extend our method to groups with an action by isometries on a hyperbolic metric space containing a WPD element; 
this applies to a large class of non-hyperbolic groups such as relatively hyperbolic groups, mapping class groups, and groups acting on CAT(0) spaces. 
\end{abstract}

\title{The Poisson boundary of hyperbolic groups without moment conditions}

\author{Kunal Chawla}
\address{University of Toronto, Canada}
\email{kunal.chawla@mail.utoronto.ca}

\author{Behrang Forghani}
\address{College of Charleston, USA}
\email{forghanib@cofc.edu}

\author{Joshua Frisch}
\address{ \'{E}cole Normale Sup\'{e}rieure, France}
\email{joshfrisch@gmail.com}

\author{Giulio Tiozzo}
\address{University of Toronto, Canada}
\email{tiozzo@math.toronto.edu}

\maketitle

\section{Introduction}
Let $G$ be a group that is equipped with a probability measure 
$\mu$.  The $\mu$-random walk on $G$ is the sequence of random variables 
$$w_n=g_1g_2\cdots g_n,$$ 
where $g_i$s are  independent and identically $\mu$-distributed.  One of the fundamental objects to study the long-term behavior of a random walk is the 
\emph{Poisson boundary}, that provides a representation, via the Poisson formula, of the space of bounded harmonic functions on a group. The study of the Poisson boundary goes back to the work of Blackwell \cite{Blackwell1955} and  Feller \cite{Feller};  however, Furstenberg \cite{Furstenberg-semi,Fu70}  extensively studied the Poisson boundary as a tool to prove rigidity results.  

One of the main and challenging problems in the theory of random walks has been identifying the Poisson boundary of a given random walk on a given group. 
For instance, so-called \emph{Choquet-Deny groups}, that is, groups whose Poisson boundary is trivial for all measures, have been characterized recently by \cite{Josh19}. 
At the opposite end,  when a group carries a natural compactification (for example, hyperbolic groups), it is natural to ask if the compactification with the hitting measure is a model for the Poisson boundary. There has been extensive research devoted to the Poisson boundary identification problem \cite{Ledrappier-Lie, Cartwright-Kaimanovich-Woess,Kaim-Masur, KM, Ledrappier2001, Kaimanovich-Woess,  Maher-Tiozzo, 
KE19, Lyons-Peres, BG-lcg},  and the references therein.

One of the most significant steps is due to Kaimanovich \cite{Kaimanovich-hyp94} who, under the hypothesis of finite entropy and finite logarithmic moment, proved the identification of the Poisson boundary of hyperbolic groups with their hyperbolic boundary.  
His proof uses the \emph{strip approximation} technique \cite{Ka}, and has been widely extended to many classes of groups,  
but heavily depends on the finite logarithmic moment assumption.

Going beyond these assumptions, \cite{BG-semi} identified the Poisson boundary for random walks on a free semigroup, under the condition that the projection to the distance to the origin has finite entropy. This, for instance, holds if the measure has either finite entropy or finite logarithmic moment. Note that, without any assumption on the measure, the identification problem is still open, even on a free semigroup. 
For free groups, the Poisson boundary identification is also unsolved beyond finite entropy and finite logarithmic moment. 
The proof for the free semigroup relies on the fact that the length at time $n$ is a random walk, which in turn relies on the fact that length is a semi-group homomorphism. 
This is no longer the case even for the free group, so a new method is required.

The goal of this paper is to solve the identification problem for random walks on hyperbolic groups with finite entropy, 
without the finite logarithmic moment assumption. A finitely generated hyperbolic group is called \emph{elementary} when it is finite or virtually cyclic.

\begin{thm} \label{T:main}
Let $\mu$ be a generating probability measure with finite entropy on a non-elementary hyperbolic group $G$. 
Let $\partial G$ be the hyperbolic boundary of $G$, and let $\nu$ be the hitting measure on $\partial G$ of the random walk 
driven by $\mu$. Then the Poisson boundary of $(G, \mu)$ is given by $(\partial G, \nu)$. 
\end{thm}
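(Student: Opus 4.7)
My plan is to identify $(\partial G, \nu)$ as the Poisson boundary in two stages: first show that it is a $\mu$-boundary, then upgrade to maximality.

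\emph{$\mu$-boundary.} Since $G$ is non-elementary hyperbolic and $\mu$ is generating, the semigroup generated by $\operatorname{supp}(\mu)$ contains a pair of independent loxodromic elements. A now-standard convergence argument going back to Kaimanovich, and valid in the broader Maher--Tiozzo framework, then gives $w_n \to \xi_\infty \in \partial G$ almost surely, with law $\nu$, so $(\partial G, \nu)$ is a $\mu$-boundary. This step requires no moment assumption.

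\emph{Maximality via an entropy criterion.} To upgrade, I would invoke Kaimanovich's strip (or ray) criterion: given a measurable assignment $S:\partial G \times \partial G \to 2^G$, if for $\nu \otimes \check{\nu}$-a.e.\ $(\xi,\check{\xi})$,
\[
\frac{1}{n}\log \bigl| S(\xi,\check{\xi}) \cap B(e,R_n) \bigr| \longrightarrow 0,
\]
and the bilateral random walk lies in $S(\xi_\infty, \check{\xi}_\infty)$ with positive probability, for a sequence $R_n$ controlling the typical size of $d(e,w_n)$, then $(\partial G, \nu)$ is the full Poisson boundary. Under a finite logarithmic moment, Kaimanovich takes $S(\xi,\check{\xi})$ to be a tubular neighborhood of the geodesic from $\check{\xi}$ to $\xi$ in the Cayley graph; then $R_n = O(n)$ and $|S \cap B(e,R)|$ grows linearly in $R$, so the criterion is verified.

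\emph{Main obstacle and how I would address it.} Without a logarithmic moment, $d(e, w_n)$ can grow super-polynomially, so $R_n$ is no longer controlled linearly and the purely metric strip collapses. My plan is to replace it by a pivoting/persistence construction adapted to the hyperbolic action, in the spirit of the Gouezel--Mathieu--Sisto deviation technology (which already operates at the WPD level, consistent with the paper's announced second theorem). Concretely, I would extract from the trajectory a positive-density random sequence of pivot times at which $w_n$ crosses a Schottky-type bottleneck aimed toward $\xi_\infty$; the pivot pattern encodes $\xi_\infty$ measurably, and Shannon--McMillan--Breiman applied to the pattern converts the hypothesis $H(\mu)<\infty$ into a subexponential count of admissible patterns compatible with $(\xi_\infty, \check{\xi}_\infty)$. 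This reformulates the strip criterion as a combinatorial "pivot pattern" criterion that consumes only finite entropy. The hardest step will be making the pivoting robust against the arbitrarily large jumps that finite entropy still permits: rare astronomical steps must be absorbed into the pivot events themselves rather than treated as obstructions, and the lower bound on pivot density must be derivable from $H(\mu)<\infty$ alone. Once this is in place, verifying the pivot-pattern criterion for the hitting measure $\nu$ on $\partial G$ reduces to a Shannon--McMillan--Breiman argument, yielding the identification.
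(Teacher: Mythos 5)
Your sketch correctly identifies the two raw ingredients the paper also uses (Maher--Tiozzo convergence to establish $(\partial G,\nu)$ as a $\mu$-boundary, and Gou\"ezel's pivot/Schottky machinery to handle the absence of moment control), and your instinct that ``rare astronomical steps must be absorbed into pivot events'' is in the right spirit. However, the maximality step as you present it has a real gap and is framed around the wrong criterion. You invoke the strip criterion with its bilateral object $S(\xi,\check\xi)$, proposing a ``pivot-pattern'' replacement for the geometric strip, but you never say how to bound the number of admissible patterns compatible with a boundary pair; without a moment condition $d(e,w_n)$ can grow arbitrarily fast, so the usual device of comparing $\log|S\cap B(e,R_n)|$ to $n$ has no handle. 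The paper explicitly avoids strip approximation. It instead applies the unilateral conditional-entropy criterion (Theorem~\ref{T:criterion}, from Kaimanovich--Vershik via \cite{BG-semi}): $(\partial G,\nu)$ is the Poisson boundary iff $\lim_n H_{\partial G}(\mathcal{A}_n)/n = 0$. The backward boundary $\check\xi$ plays no role, and no metric growth rate $R_n$ needs to be controlled.

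The missing organizational idea is the ``pin-down partition''. The paper first reduces to an alternating measure $\theta = \kappa * \mu_S^{*2}$ with the same Poisson boundary and finite entropy (Proposition~\ref{P:alternate}); Gou\"ezel's pivot theory is formulated for such walks, so this reduction is not optional. It then builds $\mathcal{P}_n = \mathcal{T}_n^\alpha \vee D_n^{\alpha,L} \vee \mathcal{B}_n^{\alpha,L}$ recording pivotal times in each $\alpha$-window, the sum of distances between consecutive pivots in good windows, and the full increment sequence inside bad windows (those with no pivot or an increment $> L$). Two estimates close the argument: $H_\xi(\mathcal{A}_n \mid \mathcal{P}_n) = O(\log n)$, because knowing $\xi$, the pivotal times, and the good distance localizes the last pivot to $O(n/\alpha)$ balls of fixed radius along a geodesic to $\xi$, and the recorded bad-interval increments then determine $w_n$; and $H(\mathcal{P}_n) \le \epsilon n$, because bad intervals are rare (via \cite[Lemma~4.9]{pivot} and a large-deviation bound) and Lemma~\ref{L:entropy-restricted} shows that a finite-entropy step variable restricted to a low-probability event has small entropy. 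Your proposal gestures at Shannon--McMillan--Breiman for the pattern process, but without the explicit partition construction and the two entropy estimates above there is no proof; and the bilateral framing, if pursued, would likely reintroduce exactly the moment dependence you set out to remove.
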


To our knowledge, this theorem gives the first examples of non-Choquet-Deny 
groups where the Poisson boundary is identified for all measures with finite entropy; in particular, it is new even for free groups. 
Our proof uses a version of the zero-entropy criterion for random walks in random environment from \cite{VK-RWRE}, 
but does not use strip approximation. 

Instead, we use the following ``pin down approximation" method.  A boundary point $ \xi \in \partial G $ determines, up to bounded distance, an infinite geodesic ray $\gamma$ converging to $\xi$. 
The main idea is that, once we condition the random walk to converge to $\xi$, we can reconstruct the position of the random walk at time $n$
by adding a small amount of information, which we encode in an additional sequence of partitions $(\mathcal{P}_n)$.
Let $\mathcal{A}_n$ be the partition on the path space that assigns the same equivalence class to those sample paths with the same $n$th step $w_n$. 
We say that the sequence $(\mathcal{P}_n)$ of partitions \emph{pins down} the random walk if
the conditional entropy $H_{\xi}(\mathcal{A}_n |  \mathcal{P}_n)$  grows sublinearly in $n$. In order to construct $(\mathcal{P}_n)$, we use results from \cite{pivot}, showing that a sample path of a random walk on a hyperbolic group can be thought of as a concatenation of long geodesic-like segments, attached along so-called `pivots'. 
The partition $(\mathcal{P}_n)$ is essentially obtained by recording the distance between consecutive pivotal points. 

Since the pivotal technique is also developed in groups that need not be hyperbolic, but which act by isometries on 
hyperbolic spaces, we are able to extend Theorem \ref{T:main} beyond hyperbolic groups as follows. 
Many groups that are not hyperbolic nonetheless admit an action on a (possibly non-proper) hyperbolic space. 
However, in most cases at least some elements satisfy a much weaker properness condition:
following \cite{Bestvina-Fujiwara}, an element $g$ is called \emph{WPD (weakly properly discontinuous)} for the action on $X$ if it is loxodromic 
and the set of elements that coarsely fixes two sufficiently far points along its axis is finite (see Definition \ref{D:WPD}). 

We obtain a boundary identification for all actions with at least one WPD element.

 \begin{thm}  \label{T:WPD-action-intro}
Let $ G $ be a countable group with a non-elementary action on a geodesic hyperbolic space $ X $ with at least one WPD element. Let $ \mu $ be a generating probability measure on $ G $ with finite entropy. Let $ \partial X $ be the hyperbolic boundary of $ X $, and let $ \nu $ be the hitting measure of the random walk driven by $ \mu $. Then the space $ (\partial X, \nu) $ is the Poisson boundary of $ (G, \mu) $.
\end{thm}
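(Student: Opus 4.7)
The plan is to run the same pin-down argument as in Theorem~\ref{T:main}, with the random walk now projected to the hyperbolic space $X$ via the $G$-action, and the partitions $(\mathcal{P}_n)$ built from pivotal data recorded along the $X$-trajectory rather than along the Cayley graph of a hyperbolic $G$. The overall scheme remains: condition on convergence to $\xi \in \partial X$, build $(\mathcal{P}_n)$ via the pivotal construction of \cite{pivot}, verify the pin-down inequality $H_\xi(\mathcal{A}_n \mid \mathcal{P}_n) = o(n)$, and conclude using the zero-entropy criterion of \cite{VK-RWRE}.

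First I would establish that the hitting measure $\nu$ on $\partial X$ is well-defined: for a non-elementary action on a hyperbolic space with at least one WPD element, the random walk converges almost surely to a point of $\partial X$ (no moment assumption is required), which follows from the convergence results of \cite{Maher-Tiozzo} in the non-proper WPD setting. This also shows that $(\partial X, \nu)$ is a $\mu$-boundary, so the task is only to upgrade it to the \emph{maximal} $\mu$-boundary.

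Next I would invoke the pivotal construction from \cite{pivot}, which is formulated precisely in the WPD framework. The WPD element together with the non-elementarity of the action provides a Schottky-type family of loxodromic isometries whose axes are sufficiently well-separated and pairwise independent, and this is the only input needed to define pivotal times along a $\P$-typical sample path. I would then define $\mathcal{P}_n$ to record the sequence of $X$-distances between consecutive pivotal positions of the walk up to time $n$, in direct analogy with the hyperbolic group case. Conditioned on convergence to $\xi$, the pivotal positions are essentially forced to lie near an infinite geodesic ray representing $\xi$, and the partition data together with the ray reconstructs the trajectory of $(w_n)$ in $X$ up to a uniformly bounded amount of residual information per pivotal stage, giving the desired $o(n)$ conditional entropy bound.

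The main obstacle is that $X$ is not the Cayley graph of $G$, so reconstructing the $X$-trajectory of the walk only determines $w_n$ up to the (potentially infinite) pointwise stabilizer of a sequence of points along the trajectory. This is exactly where the WPD hypothesis is doing its work: by Definition~\ref{D:WPD} the set of elements that coarsely fix two sufficiently separated points along the axis of the WPD element is finite, so once the pivotal data pins down the trajectory through two such far-apart pivotal positions, the ambiguity in the group element is finite and contributes only an $O(1)$ additive term to the entropy at each stage. Verifying that this finiteness survives the pivotal reconstruction uniformly along the walk, and combines with the sublinear growth of the pivotal complexity to yield $H_\xi(\mathcal{A}_n \mid \mathcal{P}_n) = o(n)$, is the technical heart of the argument; once this is in hand, the zero-entropy criterion of \cite{VK-RWRE} closes the proof exactly as for Theorem~\ref{T:main}.
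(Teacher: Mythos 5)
Your high-level scheme matches the paper's: convergence via \cite{Maher-Tiozzo}, the pivotal construction of \cite{pivot}, the pin-down partition, the zero-entropy criterion, and, crucially, you correctly identify that the whole problem with a non-proper action is that reconstructing the $X$-trajectory only determines $w_n$ up to a coarse stabilizer, and that WPD is what makes this ambiguity finite. But there is a genuine gap in how you propose to use WPD, and it is not a routine verification.

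The WPD hypothesis says: for the one loxodromic element $w$, and for any $K$, there is a power $P$ such that $\textup{Stab}_K(x, w^P x)$ is finite. It says nothing a priori about the stabilizers of pairs of \emph{pivotal positions} $(p_k o, p_{k+1} o)$, which are obtained from arbitrary elements of a Schottky set $S$ sitting in the support of $\mu^{*\ell}$. Your sentence ``once the pivotal data pins down the trajectory through two such far-apart pivotal positions, the ambiguity in the group element is finite'' is exactly the step that does not follow: two far-apart points on a quasi-geodesic track of the walk have no reason to have a finite coarse joint stabilizer just because some \emph{other} element of $G$ is WPD. The paper closes this gap by building a new Schottky set specifically engineered to inherit the WPD finiteness: it takes the given WPD element $h = w^P$ and forms $S_1 := \{ s_1 h s_2 : s_1, s_2 \in S, \ (s_1^{-1}o, ho)_o \le C, \ (h^{-1}o, s_2 o)_o \le C \}$ (Lemma~\ref{WPDSchottky}). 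Each $s = s_1 h s_2 \in S_1$ then satisfies $\textup{Stab}_K(o, so) \subseteq s_1 \textup{Stab}_{K_1}(o, ho) s_1^{-1}$, which is finite by WPD; proving this inclusion requires a separate geometric lemma (Lemma~\ref{L:fellowtravel}) showing that an isometry almost fixing the endpoints of a geodesic almost fixes every point near that geodesic, and one must also check that $S_1$ is still Schottky and that the map $(s_1,s_2) \mapsto s_1 h s_2$ is injective (via the refined Lemma~\ref{L:Sch-improved}). None of this is in your proposal.

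There is a second, smaller omission that follows from the same issue: once you have the WPD-Schottky set, the pin-down partition in the WPD case needs an extra piece of data compared to the hyperbolic-group case. You must additionally record the Schottky increment $s \in S$ used at the \emph{last} pivot (the partition $\mathcal{S}^\alpha_n$ in the paper), because the finiteness being used is of $\textup{Stab}_K(o, so)$ for a known $s$, not of the stabilizer of two generic points near the ray. Your proposal records only distances between pivotal positions, which is not enough to invoke the WPD finiteness. Recording $s$ costs only $\log(\#S+1)$ entropy, so it is harmless, but without it the final counting argument bounding $H_\xi(\mathcal{A}_n \mid \mathcal{P}_n)$ by $\log\bigl(O(n) \cdot \#\textup{Stab}_K(o,so)\bigr)$ does not go through.
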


The theorem applies to many non-hyperbolic groups. In particular, it applies to every \emph{acylindrically hyperbolic} group, as defined by Osin \cite{Osin}. 

Important examples are the following (for a survey, see the Appendix in \cite{Osin}): 

\begin{coro}
Let $G$ be: 
\begin{enumerate}

\item a free group $G = F_\infty$ on countably many generators, and $X$ its Cayley graph with respect to a free generating set; 

\item a countable group acting properly discontinuously on a proper hyperbolic space $X$ (for instance, the fundamental 
group of a hyperbolic manifold); 

\item a non-elementary relatively hyperbolic group, and $X$ the coned-off space; 

\item an irreducible, non-cyclic right-angled Artin group, and $X$ the contact graph;  

\item a countable group acting properly and non-elementarily by isometries on a CAT(0) space with a rank-one element, and $X$ the curtain model; 

\item a mapping class group of a surface $S$ of finite type, and $X$ the curve complex of $S$;  

\item the group $\textup{Out}(F_n)$, and $X$ the free factor complex; 

\item a countable subgroup of the Cremona group containing a WPD element, and $X$ the Picard-Manin hyperboloid.
\end{enumerate}
Let $\mu$ be a non-elementary, generating measure on $G$ with finite entropy. 
Then the hyperbolic boundary of $X$ 
equipped with the hitting measure is a model for the Poisson boundary of $(G, \mu)$. 
\end{coro}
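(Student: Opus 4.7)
The plan is to reduce each of the eight cases to Theorem~\ref{T:WPD-action-intro} by exhibiting, in every example, a non-elementary isometric action of $G$ on a geodesic hyperbolic space $X$ admitting at least one WPD element. Once this geometric setup is in place, the finite entropy and generating hypotheses on $\mu$ put us directly in the scope of Theorem~\ref{T:WPD-action-intro}, and the identification of the Poisson boundary with $(\partial X,\nu)$ follows at once.

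For cases (1) and (2), WPD comes essentially for free from properness. In (1) the Cayley graph of $F_\infty$ with respect to a free generating set is a tree (in particular hyperbolic), and every non-identity element is loxodromic with trivial stabilizer of its axis. In (2) the action is properly discontinuous on a proper hyperbolic space, so the coarse stabilizer of any pair of points on the axis of a loxodromic element is finite, making every loxodromic WPD. For the remaining cases (3)--(8), one invokes the by-now standard fact that each of these actions is in fact \emph{acylindrical} on the stated hyperbolic space, so that every loxodromic element is automatically WPD: relatively hyperbolic groups on the coned-off Cayley graph (Osin), right-angled Artin groups on the contact graph, CAT(0) groups with a rank-one element on the curtain model (Petyt--Spriano--Zalloum), mapping class groups on the curve complex (Bowditch, after Masur--Minsky), $\mathrm{Out}(F_n)$ on the free factor complex (Bestvina--Feighn), and subgroups of the Cremona group containing a WPD element on the Picard--Manin hyperboloid (Cantat--Lamy). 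The existence of a loxodromic (hence WPD) element, together with hyperbolicity of $X$, is established in each cited reference, and a unified survey appears in the appendix of~\cite{Osin}.

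It only remains to see that the action of $G$ on $X$ is non-elementary in the presence of a non-elementary generating measure $\mu$. Since $\mu$ is generating, the semigroup spanned by $\mathrm{supp}(\mu)$ is all of $G$, and the non-elementarity assumption on $\mu$ says precisely that this semigroup contains two independent loxodromic isometries of $X$; thus $G$ acts non-elementarily on $X$. Applying Theorem~\ref{T:WPD-action-intro} in each case yields the claimed identification of $(\partial X, \nu)$ as the Poisson boundary of $(G,\mu)$. The main obstacle, if one wished to make the proof self-contained, would be reproving WPD in each geometric setting; this would be genuinely substantial work, but for the corollary it suffices to cite it, so the substantive content sits entirely in Theorem~\ref{T:WPD-action-intro}.
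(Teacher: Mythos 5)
Your overall strategy matches the paper exactly: reduce each item to Theorem~\ref{T:WPD-action-intro} by exhibiting a WPD element. However, two of your justifications are inaccurate in ways worth flagging.

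First, for case (1) your argument is incomplete. The Cayley graph of $F_\infty$ is an infinite-valence tree, hence \emph{not} proper, so the ``properness gives WPD for free'' heuristic from case (2) does not apply here. Moreover, the fact that the $G$-stabilizer of a loxodromic axis is cyclic does not by itself give WPD: the WPD condition asks that the \emph{coarse} joint stabilizer $\mathrm{Stab}_K(o, g^P o)$ be finite, and in a non-locally-finite tree the $K$-neighbourhood of a point meets infinitely many orbit points, so finiteness of the coarse stabilizer is not automatic from freeness of the action. This is precisely why the paper cites a non-trivial result of Minasyan--Osin here rather than arguing directly.

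Second, you assert that all of the actions in cases (3)--(8) are acylindrical, so that every loxodromic is WPD. That is true for (3), (4), and (6), but overstated for the rest. For the curtain model (5), Petyt--Spriano--Zalloum show that rank-one isometries act as loxodromic WPD elements, not that the action is acylindrical. For $\mathrm{Out}(F_n)$ on the free factor complex (7), Bestvina--Feighn prove that fully irreducible automorphisms are WPD, but acylindricality of the full action is not established. For the Cremona group on the Picard--Manin hyperboloid (8), Cantat--Lamy produce WPD elements, but the action is definitely not acylindrical (it has large point stabilizers). Since Theorem~\ref{T:WPD-action-intro} only needs a single WPD element, this overstatement does not invalidate the conclusion, but the references should be quoted for what they actually prove, as the paper does.
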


\begin{proof}
All these results follow immediately from Theorem \ref{T:WPD-action-intro}, once we show that the group $G$ contains a WPD element.
(1) By \cite[Corollary 4.3]{Minasyan-Osin}, every non-trivial element of $F_\infty$ acts as a WPD element on its Cayley graph.
For (2), note that a properly discontinuous action on a proper metric space is always WPD. 
(3) By \cite[Proposition 5.2]{Osin}, the action of a non-virtually cyclic relatively hyperbolic group on its coned-off space is acylindrical, hence WPD.
(4) For the action of right-angled Artin groups on the contact graph, acylindricality is due to \cite{BHS}, following \cite{Kim-Koberda}. 
(5) As shown by \cite{Sisto-contracting}, a group acting properly on a CAT(0) space with a rank-1 isometry is acylindrically hyperbolic.
Recently, \cite{PSZ} construct for any group of isometries of a CAT(0) space a hyperbolic space, called the \emph{curtain model}, where all rank-1 isometries 
act as loxodromic WPD elements. 
(6) The action of the mapping class group on the curve complex is acylindrical by \cite{Bowditch}.
(7) The action of $Out(F_n)$ on the free factor complex contains WPD elements by \cite{Bestvina-Feighn}.
(8) WPD elements in the Cremona group have been discussed by Cantat-Lamy \cite{Cantat-Lamy}.
\end{proof}

Theorem \ref{T:WPD-action-intro} directly generalizes \cite[Theorem 1.3]{MT-WPD}, where the same result is proven under the assumption 
of finite entropy and finite logarithmic moment, using the strip approximation.

A theory of pivots beyond actions on hyperbolic spaces, namely for any action with two independent contracting 
elements, has been very recently developed by Choi \cite{Choi}; it seems plausible that these techniques can be combined with ours to 
extend the Poisson boundary identification beyond actions on hyperbolic spaces, but we will not attempt to do this here.

\subsection{Sketch of the argument}
 
Fix a boundary point $\xi \in \partial G$, and consider the conditional random walk, conditioned to hit $\xi$ at infinity. 

Roughly, an element $(w_n)$ of the sample path is a \emph{pivot} if it lies close to the limit geodesic $[o, \xi)$, 
and moreover its increment is aligned in the direction of the geodesic ray. 

We divide the interval $[0, n]$ in time intervals $I_{k, \alpha}$ of size a large number $\alpha$. 
For each path, we keep track of some additional information, essentially as follows: 
\begin{enumerate}
\item
for each $k$, if there is a pivot in the $k$-th interval, we record the distance between the two consecutive pivots; 
\item
if there is no pivot in $I_{k, \alpha}$, then we record \emph{all} increments in the time interval $I_{k, \alpha}$, 
as well as in the intervals immediately before and after: $I_{k-1, \alpha}$ and $I_{k+1, \alpha}$.
\end{enumerate}

For each $n$ and $\epsilon > 0$, this information is encoded in a new partition, which we denote as $\mathcal{P}_n$. Then we show the following two facts: 

\begin{enumerate}

\item for each $n$, the boundary point and the additional pivotal data $\mathcal{P}_n$ is enough to pin down the location of the $n$th step $w_n$ of the walk; 
namely, the conditional entropy satisfies 
$$H_{\xi}(\mathcal{A}_n |  \mathcal{P}_n) \leq \epsilon n;$$

\item the added information $\mathcal{P}_n$ has low 
entropy, that is $H(\mathcal{P}_n) \leq \epsilon n $; 
this is in part due to the fact that, by \cite{pivot}, intervals without pivots (that we call \emph{bad intervals}) appear quite rarely.

\end{enumerate}
This implies that the conditional entropy of the walk, conditioned on hitting $\xi$, is zero, thus showing the maximality of the hyperbolic boundary.

\subsection{Structure of the paper}
Section~\ref{sec:bound} includes definitions of the Poisson boundary and conditional entropies, and a version of 
the entropy criterion. In Section~\ref{sec:hyper} we provide definitions of hyperbolic spaces and the theory of pivots. In Section~\ref{sec:pin-down} we construct a ``pin down" partition and we prove Theorem~\ref{T:main}. In the last section we show how to extend our main theorem to groups with an action by isometries on a hyperbolic metric space with a WPD element.

\subsection*{Acknowledgements}
We thank the American Institute of Mathematics for hosting us during the workshop ``Random walks beyond hyperbolic groups" in April 2022, where discussion about this problem started. We also thank Omer Angel for his contribution to the discussion. Moreover, we thank Inhyeok Choi, Vadim Kaimanovich, Samuel Taylor and Abdul Zalloum for useful comments on the first version of the paper.
J.F. is partially supported by NSF grant 2102838. G.T. is partially supported by NSERC grant RGPIN-2017-06521 and an Ontario Early Researcher Award.

\section{Boundaries and Entropy}\label{sec:bound}

\subsection{Boundaries}
We always assume $G$ is countable group.
Suppose $\mu$ is a generating probability measure on $G$, i.e. the semigroup generated by the support of $\mu$ is $G$. Let $m$ be an (auxiliary) probability measure on $G$ such that $m(g)>0$ for every element $g$ in $G$. Consider the  $\mu$-random walk on $G$ with initial distribution $m$. Let
$$
w_n = g_0g_1\cdots g_n,
$$
where  $g_i$s are independent and $g_0$ has the law of $m$ and $g_1,\cdots, g_n$ have the law of $\mu$. 
 Let $(\Omega,\P_m)$ be the space of sample paths of the $\mu$-random walk with initial distribution $m$. 
 Two sample paths $\bm w=(w_n)$ and $\bm w'=(w'_n)$ are equivalent if there exist $k$ and $k'$ such that $w_{i+k}=w'_{i+k'}$ for all $i \geq 0$. Let $\mathcal{I}$ be the $\sigma$-algebra of all measurable unions of these equivalence classes (mod $0$) with respect to $\P_m$. Rokhlin's theory of Lebesgue spaces \cite{Rokhlin52} implies that there exist a unique measurable space (up to isomorphism)  $\partial_\mu G$ with a $\sigma$-algebra $\mathcal{S}$ and a measurable map $\bm{bnd}: \Omega \to \partial_\mu G$ such that the pre-image of $\mathcal{S}$ under $\bm{bnd}$ is $\mathcal{I}$. 
 
Moreover, let us denote as $\P$ the measure on the space $\Omega$ of sample paths for the $\mu$-random walk with initial distribution the 
$\delta$-measure at the identity. 
 
 \begin{defin}
 The Poisson boundary of the random walk $(G, \mu)$ is the probability space $(\partial_\mu G, \nu)$, where $\nu$ is the image of the probability measure $\P$ under the measurable map $\bm{bnd}$.  
 \end{defin}

Note that the measures $m$ and $\P_m$ are only auxiliary tools to give this definition, and will not be used in the rest of the paper. Since the group $G$ acts on sample paths and it preserves equivalence classes, the action of $G$ extends to a natural action on the Poisson boundary. Moreover, the definition above implies that $\nu$ is \emph{$\mu$-stationary}, that is 
$$
\mu*\nu=\sum_g \mu(g) g\nu = \nu.
$$

Bounded $\mu$-harmonic functions can be represented by the Poisson boundary. A real-valued function $f$ on $G$ is called \emph{$\mu$-harmonic} if it satisfies the mean value property:
$$
P^\mu f(g): = \sum_{h \in G} f(g h) \mu(h)= f(g) \qquad \textup{for any }g \in G.
$$
Let $H^\infty(G,\mu)$ denote the space of all bounded, $\mu$-harmonic functions. 
Then the Poisson formula \cite{Furstenberg-semi} states that 
the  map 
$$H^\infty(G,\mu) \to  L^\infty(\partial_\mu G, \nu) $$
$$ f \mapsto \hat f $$
defined by $\hat f (\bm{bnd}(\bm w) )= \displaystyle\lim_{n\to\infty} f(w_n)$
is an isometric isomorphism, with inverse given by $f(g)=\int_{\partial_\mu G} \hat f \ dg\nu$.  

\begin{defin}
A $\mu$-boundary for $(G, \mu)$ is a measurable $G$-space $(B, \lambda)$ which is the quotient of the Poisson boundary with respect to a $G$-equivariant measurable partition.
\end{defin}

Similarly to the construction of the Poisson boundary, there exists a natural measurable map, called boundary map, $\pi_B: \Omega \to B$ such that the image of $\P$ under $\pi_B$ is $\lambda$. By the definition above, the Poisson boundary is the maximal $\mu$-boundary. 

Now, suppose that $G$ acts by isometries on a metric space $X$ with a base point $o$, and almost every sample path $(w_n o)$ converges 
to a point in a suitable topological compactification $\partial X$ of $X$. Then one defines the \emph{hitting measure} $\lambda$ on $\partial X$ by setting for any Borel $A \subseteq \partial X$ 
$$\lambda(A) := \mathbb{P}(\lim_{n \to \infty} w_n o \in A).$$
Then $(\partial X, \lambda)$ is a $\mu$-boundary. In order to prove it is also the Poisson boundary, we need to show 
it is maximal. The main tool to prove maximality of a $\mu$-boundary is the entropy theory.

\subsection{Entropy} 
Consider a $\mu$-random walk on a countable group $G$.  Let $(\Omega,\P)$ be the space of sample paths associated to the $\mu$-random walk, starting at the identity element of $G$.  
We shall use the language of partitions to define entropy. Let $\rho=(\rho_i)$ be a countable partition on the space of sample paths. Denote the entropy of $\rho$ as
$$
H_\P(\rho)=H(\rho) := - \sum_i  \P(\rho_i) \log\P(\rho_i). 
$$ 
Two sample paths $\bm w=(w_i)$ and $\bm w'=(w'_i)$ are $\mathcal{A}_n$-equivalent when $w_n=w'_n$.  Note that the distribution of $w_n$ is $\mu^{*n}$, the $n$-fold convolution of $\mu$, hence  
$$
H(\mu^{*n}) = H(\mathcal{A}_n) =- \int_\Omega \log \P(w_n) d\P(\bm w)  = - \sum_g \mu^{*n}(g) \log\mu^{*n}(g).
$$

\subsection{Conditional Entropy}\label{sec:conditional}
Let $(B, \lambda)$ be a $\mu$-boundary, with boundary map $\pi_B: (\Omega, \P) \to (B, \lambda)$. Then for $\lambda$-almost every $\xi$ in $B$, the conditional probability measure $\P^\xi$ exists and 
$$
\P= \int _B\P^\xi \ d\lambda(\xi). 
$$
Let $\rho=(\rho_i)$ be a countable partition,  define  the \emph{conditional entropy} given $\xi \in B$ as 
$$
H_\xi(\rho) := H_{\P^\xi}(\rho) =  - \sum_i  \P^\xi(\rho_i) \log\P^\xi(\rho_i). 
$$
We define the conditional entropy of $\rho$ given the boundary $B$ as 
$$H_B(\rho) := \int_B H_\xi(\rho) \ d\lambda(\xi).$$
Note that by convexity we have $H_B(\rho) \leq H(\rho)$ for any $\mu$-boundary $(B, \lambda)$.

We use the following lemma to determine whether a $\mu$-boundary is the Poisson boundary. 

\begin{thm}\cite[Theorem 4.5]{BG-semi} \label{T:criterion}
Let $(B, \lambda)$ be a $\mu$-boundary. If $H_B(\mathcal{A}_1) $ is finite, then there exists the limit 
$$
h(B,\lambda) :=\lim_{n\to\infty} \frac{H_B(\mathcal{A}_n)}{n}.
$$
Moreover, $h(B,\lambda) =0$ if and only if $(B,\lambda)$ is the Poisson boundary. 
\end{thm}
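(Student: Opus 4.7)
The plan is to establish subadditivity of the sequence $n\mapsto H_B(\mathcal{A}_n)$ and then invoke Fekete's lemma to produce the limit, after which I would prove each direction of the equivalence separately. For the subadditivity bound $H_B(\mathcal{A}_{n+m}) \leq H_B(\mathcal{A}_n)+H_B(\mathcal{A}_m)$, I would introduce the auxiliary partition $\mathcal{C}_{n,m}$ of $\Omega$ given by the value of the increment $w_n^{-1}w_{n+m}$. Since $w_{n+m}=w_n\cdot(w_n^{-1}w_{n+m})$, the partition $\mathcal{A}_{n+m}$ is coarser than $\mathcal{A}_n\vee \mathcal{C}_{n,m}$, so the chain rule gives
$$H_\xi(\mathcal{A}_{n+m}) \;\leq\; H_\xi(\mathcal{A}_n) + H_\xi(\mathcal{C}_{n,m}\mid \mathcal{A}_n).$$
Under $\mathbb{P}^\xi$, the Markov property of the conditioned walk (a Doob $h$-transform of the original walk) ensures that, conditionally on $w_n=g$, the increment $w_n^{-1}w_{n+m}$ has the distribution of $w_m$ under $\mathbb{P}^{g^{-1}\xi}$; hence
$$H_\xi(\mathcal{C}_{n,m}\mid \mathcal{A}_n) = \sum_g \mathbb{P}^\xi(w_n=g)\, H_{g^{-1}\xi}(\mathcal{A}_m).$$
Integrating in $\xi$ against $\lambda$, using the joint-law identity $\mathbb{P}^\xi(w_n=g)\,d\lambda(\xi) = \mu^{*n}(g)\,d(g\lambda)(\xi)$ that follows from $\mu$-stationarity, and performing the change of variable $\eta=g^{-1}\xi$, would collapse the right-hand side to $H_B(\mathcal{A}_m)$. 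Together with the hypothesis $H_B(\mathcal{A}_1)<\infty$ (which upgrades to $H_B(\mathcal{A}_n)\leq nH_B(\mathcal{A}_1)<\infty$), Fekete's lemma then yields the limit $h(B,\lambda)$ as an infimum.

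For the direction $(B,\lambda)$ Poisson $\Rightarrow h(B,\lambda)=0$, my approach would be to exploit tail triviality: when $(B,\lambda)$ is the Poisson boundary, the tail $\sigma$-algebra of the walk agrees $\mathbb{P}$-a.s.\ with the $\sigma$-algebra generated by $\bm{bnd}$, so for $\lambda$-a.e.\ $\xi$ the Doob-conditioned Markov chain under $\mathbb{P}^\xi$ has trivial tail. Applying Shannon--McMillan--Breiman to this Markov chain would produce the a.s.\ limit $-\tfrac{1}{n}\log\mathbb{P}^\xi(w_n)\to h(B,\lambda)$, and the cocycle identity $\mathbb{P}^\xi(w_n=g)=\mu^{*n}(g)\,\frac{d(g\lambda)}{d\lambda}(\xi)$ together with the sublinear (in logarithm) growth of the Radon--Nikodym derivative along a typical path would force the limit to vanish. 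The critical point is that this argument uses only the hypothesis $H_B(\mathcal{A}_1)<\infty$, not the stronger $H(\mu)<\infty$ of Kaimanovich's classical criterion.

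For the converse $h(B,\lambda)=0\Rightarrow (B,\lambda)$ Poisson, I would take $(\Gamma,\nu)$ to be the Poisson boundary and $p:\Gamma\to B$ the canonical factor map. Disintegrating $\mathbb{P}^\xi=\int_{p^{-1}(\xi)}\mathbb{P}^\eta\,d\nu_\xi(\eta)$ and using the concavity of Shannon entropy would give $H_\xi(\mathcal{A}_n)\geq \int H_\eta(\mathcal{A}_n)\,d\nu_\xi(\eta)$, hence $H_B(\mathcal{A}_n)\geq H_\Gamma(\mathcal{A}_n)$, and consequently $h(\Gamma,\nu)\leq h(B,\lambda)=0$. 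The gap $H_B(\mathcal{A}_n)-H_\Gamma(\mathcal{A}_n)$ equals the conditional mutual information $I(\mathcal{A}_n;\Gamma\mid B)$; its vanishing asymptotic rate would force $\Gamma$ to become asymptotically generated by $B$ together with the partitions $\mathcal{A}_n$, and letting $n\to\infty$ while using that the $(\mathcal{A}_n)$ generate (up to shift-equivalence) the tail $\sigma$-algebra --- which $\Gamma$ represents exactly --- would identify $\Gamma$ with $B$ modulo null sets.

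The hard part is this converse direction: turning a quantitative entropy-rate identity into the qualitative measurable identification of two $\mu$-boundaries without access to the Avez entropy $h_\mu$, which may well be infinite under the mere assumption $H_B(\mathcal{A}_1)<\infty$. In Kaimanovich's original framework one freely writes $h(B,\lambda)=h_\mu-(\text{boundary contribution})$ and reads off the criterion from finite arithmetic; here every estimate must instead be phrased purely in terms of the conditional quantities $H_B(\mathcal{A}_n)$, and carrying out this reformulation --- which is the content of \cite{BG-semi} --- is precisely what makes the criterion applicable to measures with $H(\mu)=\infty$ but $H_B(\mathcal{A}_1)<\infty$.
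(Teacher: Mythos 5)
The paper does not prove this theorem: it is imported from \cite[Theorem 4.5]{BG-semi}, and the sentences immediately after the statement explain that it follows from the zero-entropy criterion for random walks in random environment \cite{VK-RWRE}, or equivalently for Markov chains on equivalence relations \cite[Theorem 2.17]{Kaimanovich-Sobieczky2012}. So there is no in-paper proof to compare against. Your subadditivity argument for the existence of the limit is correct --- the increment partition $\mathcal{C}_{n,m}$, the Markov property of the $\mathbb{P}^\xi$-conditioned walk, and the change-of-variable identity $\mathbb{P}^\xi(w_n=g)\,d\lambda(\xi)=\mu^{*n}(g)\,d(g\lambda)(\xi)$ give $H_B(\mathcal{A}_{n+m})\leq H_B(\mathcal{A}_n)+H_B(\mathcal{A}_m)$, and Fekete finishes --- and this is indeed the natural opening move.

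The genuine gap is in the direction ``$(B,\lambda)$ Poisson $\Rightarrow h(B,\lambda)=0$''. You decompose $-\tfrac{1}{n}\log\mathbb{P}^\xi(w_n) = -\tfrac{1}{n}\log\mu^{*n}(w_n) - \tfrac{1}{n}\log\tfrac{d(w_n\lambda)}{d\lambda}(\xi)$ and claim the Radon--Nikodym term grows sublinearly. There are two problems. First, in the classical finite-entropy picture the Radon--Nikodym cocycle grows \emph{linearly} at rate $-h_\mu$; that cancellation is the whole point of the Kaimanovich--Vershik computation, so ``sublinear'' is not even the right expectation. Second, and more fundamentally, the decomposition routes through the unconditioned quantity $\mu^{*n}(w_n)$, and under the hypothesis $H_B(\mathcal{A}_1)<\infty$ with $H(\mu)=\infty$ you have no control over $-\tfrac{1}{n}\log\mu^{*n}(w_n)$: the Avez entropy need not be defined, and each of the two terms can be infinite. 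As written, the argument only recovers the classical criterion of \cite[Theorem 4.6]{Ka}, which this theorem is designed to supersede. The correct route --- and the reason the hypothesis is conditional finiteness $H_B(\mathcal{A}_1)<\infty$ rather than $H(\mu)<\infty$ --- is to never leave the conditioned picture: view the Doob $h$-transform as a Markov chain in the random environment $\xi$, whose one-step entropy is finite by hypothesis, and apply the Markov-chain zero-entropy criterion \cite[Theorem 2.17]{Kaimanovich-Sobieczky2012} to equate trivial tail with zero entropy rate. That argument gives both implications of the equivalence at once and never invokes $\mu^{*n}(w_n)$ or $h_\mu$; spelling it out is exactly the content of \cite{BG-semi}, as you correctly anticipate at the end of your sketch without carrying it through.
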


This formulation is stronger than 
\cite[Theorem 4.6]{Ka}, as it does not require the finite entropy of $\mu$. 
Note that Theorem~ \ref{T:criterion} is a consequence of the zero-entropy criterion for the Poisson boundary of random walks in random environment \cite{VK-RWRE}, 
also a consequence of the zero-entropy criterion for Markov chains on equivalence relations \cite[Theorem 2.17]{Kaimanovich-Sobieczky2012}; for random walks on groups, the equivalence relations consist of orbits of $\mu$-boundary points, see \cite{BG-semi} for more details.

\subsection{A general lemma about entropy} 

The following lemma will be crucial for our entropy arguments. 

\begin{lem} \label{L:entropy-restricted}
Let $A$ be a countable set, and $a \in A$. 
Let $Z : (\Omega, \mathbb{P}) \to A \setminus\{a\}$ be a random variable, with $H(Z) < + \infty$. 
Then for any $\epsilon > 0$ there exists $\delta>0$ such that, for any measurable set $E \subseteq \Omega$ such that $\mathbb{P}(E) < \delta$, 
the random variable 
$$Y(\omega) := \left\{ \begin{array}{ll}  Z(\omega) & \textup{if } \omega \in E\\
							 a & \textup{if } \omega \notin E 
							 \end{array} \right.$$
satisfies $H(Y) < \epsilon$. 
\end{lem}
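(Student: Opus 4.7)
The plan is to decompose $H(Y)$ into the contribution from the symbol $a$, which $Y$ takes with probability $1 - \mathbb{P}(E)$, plus the contributions from the remaining symbols $b \in A \setminus \{a\}$, for which $Y$ has probability $q_b := \mathbb{P}(\{Z = b\} \cap E)$. Writing $p := \mathbb{P}(E)$, we get
$$H(Y) = -(1-p)\log(1-p) - \sum_{b \neq a} q_b \log q_b,$$
and the first term clearly tends to $0$ as $p \to 0$, so it is handled by choosing $\delta$ small.

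The main work is bounding the sum on the right in terms of $H(Z)$, via the two-sided bound $q_b \leq \min(p, p_b)$ with $p_b := \mathbb{P}(Z = b)$. First I would use the hypothesis $H(Z) < \infty$ to select a finite subset $F \subseteq A \setminus \{a\}$ such that $\sum_{b \notin F} -p_b \log p_b < \epsilon/3$, and, enlarging $F$ if needed, also arrange $p_b \leq 1/e$ for every $b \notin F$ (only finitely many $p_b$ can exceed $1/e$). Since $x \mapsto -x \log x$ is increasing on $(0, 1/e)$, the inequality $q_b \leq p_b \leq 1/e$ gives $-q_b \log q_b \leq -p_b \log p_b$ for $b \notin F$, so the tail portion of the sum is bounded by $\epsilon/3$ uniformly in $E$.

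For the finite piece $\sum_{b \in F} -q_b \log q_b$, the same monotonicity applied with $q_b \leq p$ (assuming $p < 1/e$) yields $-q_b \log q_b \leq -p \log p$, so this sum is at most $|F| \cdot (-p \log p)$, which tends to $0$ as $p \to 0$. Taking $\delta$ small enough that $\delta < 1/e$, $-(1-\delta)\log(1-\delta) < \epsilon/3$, and $|F| \cdot (-\delta \log \delta) < \epsilon/3$, we conclude $H(Y) < \epsilon$ whenever $\mathbb{P}(E) < \delta$. The main obstacle, such as it is, lies in the tail: one cannot cap the number of distinct values $Y$ takes on $E$, so the tail bound must come from $H(Z) < \infty$ itself rather than from a counting argument, and splitting at a finite $F$ is the natural device for converting finiteness of entropy into a uniform-in-$E$ estimate.
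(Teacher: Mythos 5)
Your proof is correct and follows essentially the same route as the paper's: split $H(Y)$ into the contribution from $a$, a finite set of symbols, and the tail, then use monotonicity of $-t\log t$ on $[0,1/e]$ together with $q_b \le \min(p, p_b)$ on each piece. You are in fact slightly more careful than the paper at one point, explicitly enlarging $F$ to ensure $p_b \le 1/e$ for $b \notin F$ before invoking monotonicity on the tail — a hypothesis the paper uses but does not spell out.
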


\begin{proof}
Define $\varphi(t) := - t \log t$, which is increasing on $[0, 1/e]$ and decreasing on $[1/e, 1]$. 
Since $Z$ has finite entropy, there exists a finite subset $U \subseteq A \setminus\{a\}$ such that 
$$\sum_{x \notin U} \varphi(\mathbb{P}(Z = x)) < \epsilon/2.$$
Let us also pick $\delta < 1/e$. Hence, if $\mathbb{P}(E) < \delta$, we obtain
\begin{align*}
H(Y) & = \varphi( \mathbb{P}(E^c)) +  \sum_{x \in A} \varphi(\mathbb{P}( \{Z = x\} \cap E) )  \\ 
& = \varphi( \mathbb{P}(E^c)) +  \sum_{x \in U} \varphi(\mathbb{P}( \{Z = x\} \cap E) ) +  \sum_{x \in U^c} \varphi(\mathbb{P}( \{Z = x\} \cap E) )  \\
& \leq \varphi( \mathbb{P}(E^c)) +  \sum_{x \in U} \varphi( \mathbb{P}(E)) +  \sum_{x \in U^c} \varphi( \mathbb{P}(\{Z = x\}) )  \\
& \leq \varphi(1-\delta) + \# U  \cdot \varphi(\delta)  + \epsilon/2.
\end{align*}
Now, we can choose $\delta$ small enough so that $\varphi(1-\delta) + \# U  \cdot \varphi(\delta) < \epsilon/2$, completing the claim.
 \end{proof}
 
\begin{rem}
In the previous proof, the value of $\delta$ depends only on $\epsilon$ and the law of $Z$; this fact will play an important role 
in our application of Lemma \ref{L:entropy-restricted}.
\end{rem}

\section{Hyperbolic Geometry and Pivots}\label{sec:hyper}

\subsection{Pivots: the free group case}

The notion of pivots is essential in our construction of the pin down partition. For the sake of exposition, let us first define pivots in the simpler case of a free group; 
in this case, this notion does not require any knowledge of hyperbolic geometry. 

Let $G = F_k$ be a free group of finite rank, and consider its action on its own Cayley graph for the standard generating set.
Such a Cayley graph is a tree of valence $2 k$, which we will denote as $T$. We take as a base point $o \in T$ the identity element.
We also fix a constant $M \geq 0$. 

Given a point $p \in T$, we define the \emph{shadow} $S_{p}$ centered at $p$ as the set of points $x \in T$ such that the geodesic joining $x$ and $o$ passes within distance $ M $ of the point $p$. 

Let us now consider the sample path $(w_n)$ for a random walk on $G$. 

\begin{defin}
	Given an integer $n \geq 1$, we define the set $P_n$ of \emph{pivotal times} as follows. 
An integer $k \in \{1, \dots, n \}$ is a pivotal time if for any $h$ with $k \leq h \leq n$, the point $w_h$ lies in the shadow $S_{w_k}$ of $w_k$. 
\end{defin}

\begin{defin}
We say a time $k$ is \emph{pivotal from infinity} if, for any $n \geq k$, the point $w_n$ lies in the shadow $S_{w_k}$. 
The point $w_k$ is called a \emph{pivotal element}. 
\end{defin}

Note that, if we denote as $\xi \in \partial T$ the limit point of the sample path $(w_n)$, then for any pivotal time from infinity $k$, the element $w_k$ lies within distance $ M $ of $[o, \xi)$. 

Let us point out that, for a random walk on a free semigroup, \emph{every} time is a pivotal time. Of course this is no longer true passing from the free semigroup 
to the free group; however, we shall show that pivotal times are somehow abundant. 

In the general case of a group $ G $ acting on a hyperbolic space $ X $, the constant $ M $ also depends on the geometry of $ X $. In the general case, not only does $ w _{k} $ lie within distance $ M $ of $ [o, \xi) $, but so does $ w _{k} s $ for one choice among finitely many group elements $ s \in G $. This allows us to apply our argument, for instance, to $ F _{\infty} $.

To explain the notion of pivots carefully in this case, we need some background on hyperbolic geometry. 

\subsection{Hyperbolic spaces}

Let $(X, d)$ be a  metric space and $o \in X$ a base point. For $x,y$ and $z$ in $X$, define the \emph{Gromov product} as 
$
(x,y)_z := \frac{1}{2} \Big(d(x,z) + d(y,z) - d(x,y) \Big).
$
Let $\delta \geq 0$. The metric space $(X,d)$ is called \emph{$\delta $-hyperbolic} if for every $x,y,z$, and $o$ in $X$
$$
(x, y)_{o} \geq \min \big\{ (x, z)_{o}, (y, z)_{o} \big\} - \delta.
$$

We say that $ (X,d) $ is \emph{hyperbolic} if it is $ \delta$-hyperbolic for some $ \delta \geq 0 $. A group $ G $ is hyperbolic if it is generated by some finite set $ \Sigma $ and the Cayley graph $ \Gamma(G, \Sigma) $ is hyperbolic. By increasing $ \delta $ if necessary, we can also ensure that all triangles are $ \delta $-thin.

\begin{defin} 
Given constants $C, D \geq 0$, a sequence of points $ x _{0}, ..., x _{n} $ is a $ (C,D)$-chain if $ (x _{i-1}, x _{i+1}) _{x _{i}} \leq C $ for all $ 0 < i < n $ and $ d(x _{i}, x _{i+1}) \geq D $ for all $ 0 \leq i < n $.
\end{defin}

The following ``local-to-global principle" for geodesics is well known. 

\begin{lem}[\cite{pivot}, Lemma 3.7, 3.8] \label{L:canoe}
Let $ x _{0}, ..., x _{n} $ be a $ (C,D) $-chain with $ D \geq 2C + 2 \delta + 1 $. Then $ (x _{0}, x _{n}) _{x _{i}} \leq C+2\delta $ and \[ d(x _{0}, x _{n}) \geq n .\] 
\end{lem}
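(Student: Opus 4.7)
The plan is to prove both conclusions together by induction on the chain length $n$. The base cases $n = 1$ (vacuous Gromov-product claim, and $d(x_0, x_1) \geq D \geq 1$) and $n = 2$ (the Gromov-product bound is exactly the chain condition $(x_0, x_2)_{x_1} \leq C$, and $d(x_0, x_2) \geq 2D - 2C \geq 2$) are immediate.

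The core step is to first establish a sharper \emph{endpoint} bound, namely $(x_0, x_k)_{x_{k-1}} \leq C + \delta$, by a secondary induction on $k \geq 2$. For the inductive step, I apply the four-point $\delta$-hyperbolicity condition at the base $x_{k-1}$ to the triple $x_0, x_{k-2}, x_k$. The chain condition gives $(x_{k-2}, x_k)_{x_{k-1}} \leq C$, while the identity $(x_0, x_{k-2})_{x_{k-1}} + (x_0, x_{k-1})_{x_{k-2}} = d(x_{k-2}, x_{k-1}) \geq D$, combined with the inductive hypothesis $(x_0, x_{k-1})_{x_{k-2}} \leq C + \delta$, forces $(x_0, x_{k-2})_{x_{k-1}} \geq D - C - \delta \geq C + \delta + 1$ under the hypothesis $D \geq 2C + 2\delta + 1$. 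Among the three Gromov products at $x_{k-1}$, this quantity exceeds the chain-bounded one by more than $\delta$, so it must be strictly the largest; the $\delta$-hyperbolicity condition then forces the two smallest to differ by at most $\delta$, yielding $(x_0, x_k)_{x_{k-1}} \leq C + \delta$. Reversing the chain gives the symmetric bound $(x_0, x_n)_{x_1} \leq C + \delta$.

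With sharp endpoint bounds available for every subchain, I obtain the general interior bound $(x_0, x_n)_{x_i} \leq C + 2\delta$ for $0 < i < n$ by applying the four-point condition at base $x_i$ to the triple $x_0, x_{i-1}, x_n$; the endpoint bound on the subchain $x_{i-1}, x_i, \ldots, x_n$ gives $(x_{i-1}, x_n)_{x_i} \leq C + \delta$, while $(x_0, x_{i-1})_{x_i}$ is again forced to be large by the hypothesis on $D$, isolating $(x_0, x_n)_{x_i}$ and $(x_{i-1}, x_n)_{x_i}$ as the two smallest Gromov products and giving $(x_0, x_n)_{x_i} \leq (x_{i-1}, x_n)_{x_i} + \delta \leq C + 2\delta$. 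For the distance bound, I expand $d(x_0, x_n) = d(x_0, x_1) + d(x_1, x_n) - 2(x_0, x_n)_{x_1}$, use the inductive distance bound $d(x_1, x_n) \geq n - 1$ and the sharp endpoint estimate $(x_0, x_n)_{x_1} \leq C + \delta$; the calculation $d(x_0, x_n) \geq D + (n-1) - 2(C + \delta) \geq n$ works out precisely under $D \geq 2C + 2\delta + 1$.

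The main obstacle is the delicate bookkeeping of additive constants: the sharper endpoint bound $C + \delta$, rather than the interior $C + 2\delta$, is what makes the distance estimate close with no slack to spare, and the hypothesis $D \geq 2C + 2\delta + 1$ is exploited at each application of the four-point condition to guarantee strict separation among the three Gromov products at the relevant base point, so that $\delta$-hyperbolicity identifies the correct pair as the two smallest rather than producing a weaker inequality.
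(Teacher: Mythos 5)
Your proof of the endpoint estimate $(x_0,x_k)_{x_{k-1}} \le C+\delta$ is correct, and the distance bound $d(x_0,x_n)\ge n$ follows cleanly from it exactly as you describe; the bookkeeping there closes perfectly under $D \ge 2C+2\delta+1$. However, the step passing from the endpoint bound to the interior bound $(x_0,x_n)_{x_i} \le C+2\delta$ has a genuine gap.

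The issue is the claim that $(x_0, x_{i-1})_{x_i}$ is ``forced to be large'' in a way that isolates $(x_0,x_n)_{x_i}$ and $(x_{i-1},x_n)_{x_i}$ as the two smallest products. Set $A := (x_{i-1},x_n)_{x_i}$, $B := (x_0,x_{i-1})_{x_i}$, $X := (x_0,x_n)_{x_i}$. You establish $A \le C+\delta$ and $B \ge D - (C+\delta) \ge C+\delta+1$, so indeed $A < B$. The four-point inequality says the two smallest of $\{A,B,X\}$ differ by at most $\delta$. If $X \le B$, the two smallest are $A$ and $X$ and you get $X \le A + \delta \le C+2\delta$ as claimed. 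But nothing rules out $X > B$. In that case the two smallest are $A$ and $B$, and the four-point condition only yields $B - A \le \delta$, which is consistent with $B - A \ge D - 2C - 2\delta \ge 1$ whenever $\delta \ge 1$; it gives no upper bound on $X$ at all. To exclude $X > B$ by your method one needs $B - A > \delta$, i.e.\ $D > 2C + 3\delta$, which is strictly stronger than the stated hypothesis once $\delta \ge 1$. Note the contrast with your endpoint induction, where the role of $A$ is played by $(x_{k-2},x_k)_{x_{k-1}} \le C$ (the raw chain condition, not the endpoint bound), so that the analogous difference is $\ge D - 2C - \delta \ge \delta + 1 > \delta$ and the bad case is excluded. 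That extra $\delta$ is lost when you replace the chain condition by the weaker endpoint bound on the subchain $x_{i-1},\dots,x_n$, and the margin in $D \ge 2C+2\delta+1$ is exactly one unit, not $\delta$. So for $\delta \ge 1$ (the typical case) the interior bound does not follow from the argument as written, and a different reduction is needed for the genuinely interior indices $1 < i < n-1$.
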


\begin{defin} 
Let $ y, y ^{+}, z \in X $. We say that $ z $ belongs to the \emph{$ C$-chain-shadow} of $ y ^{+} $ seen from $ y $ if there exists a $ (C, 2C+2\delta+1) $-chain $ x _{0} = y, x _{1}, ..., x _{n} = z $ satisfying $ (x _{0}, x _{1}) _{y ^{+}} \leq C $. 
\end{defin}


\subsection{Hyperbolic boundary}
A sequence $(z_n)$ in $X$ is \emph{divergent} if $\displaystyle\liminf_{n,k \to \infty }(z_n,z_k)_o =\infty$ for some $o$ in $X$.  Two divergent sequences $(z_n)$ and $(z'_n)$ are $\simeq$-equivalent when  $\displaystyle\lim_{n \to \infty }(z_n,z'_n)_o =\infty$.  
We denote by $\partial X$ the \emph{hyperbolic boundary} of $X$, that is the quotient of the set of divergent sequences with respect to the equivalence relation $\simeq$. 
If the space $X$ is proper and geodesic, the boundary $\partial X$ can be also described as the space of geodesic rays $\gamma : [0, \infty) \to X$ 
with $\gamma(0) = o$, modulo the equivalence relation that identifies two geodesic rays $\gamma_1(t), \gamma_2(t)$ if 
$\sup_{t > 0} d(\gamma_1(t), \gamma_2(t)) < \infty$. 


Let $G$ be a group acting by isometries on a hyperbolic metric space $(X, d)$, with a base point $o \in X$. 
Recall that an element $g$ is 
\emph{loxodromic} if its translation length $\tau(g) := \lim_{n \to \infty} \frac{d(g^n o, o)}{n}$ is positive. 
A semigroup acting by isometries on a hyperbolic metric space is \emph{non-elementary} if it contains two 
loxodromic elements with disjoint fixed sets on $\partial X$. A measure on $G$ is non-elementary 
if the semigroup generated by its support is non-elementary. 
 
\subsection{Schottky sets}

The notion of \emph{Schottky set} arises from \cite{Sisto} and is used in \cite{pivot} to define pivots. 

\begin{defin} 
	Let $ \varepsilon, C, D \geq 0 $. A finite set $ S \subset \text{Isom}(X) $ is $ (\varepsilon, C, D)-$Schottky if \begin{enumerate} 
	\item For all $ x,y \in X $ we have $ \# \{ s \in S| (x, sy) _{o} \leq C\}  \geq (1- \varepsilon) \# S  $.
	\item  For all $ x,y \in X $ we have $ \# \{ s \in S| (x, s ^{-1} y) _{o} \leq C \}  \geq (1- \varepsilon) \#S $.
	\item For all $ s \in S $, we have $ d(o, s o) \geq D $.
	  \end{enumerate}
\end{defin}
		   
Recall the following fact from \cite{pivot}:

\begin{lem}\label{L:Schottky}\cite[Corollary 3.13]{pivot}
Let $ \mu $ be a non-elementary measure on a countable set of isometries of a $ \delta $-hyperbolic space $ X $. For all $ \varepsilon > 0 $, there exists $ C > 0 $ such that for all $ D > 0 $, there exist 
$ \ell > 0 $ and an $ ( \varepsilon, C, D)-$Schottky set in the support of $ \mu^{*\ell} $.   
\end{lem}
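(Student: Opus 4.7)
The plan is to exploit non-elementarity of $\mu$ to construct, for a suitable integer $\ell$, a large finite subset of $\mathrm{supp}(\mu^{*\ell})$ whose elements translate $o$ by at least $D$ and whose displacements $\{so\}$ lie in pairwise widely separated ``directions'' on $\partial X$. Once such a set is in hand, the Schottky axioms (1) and (2) reduce to a Gromov product comparison between $sy$ and $so$.

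The first step is extraction. Since $\mu$ is non-elementary, some convolution $\mu^{*k}$ contains two loxodromic isometries $h_1, h_2$ in its support whose four fixed points $h_1^\pm, h_2^\pm \in \partial X$ are pairwise distinct. Hence there is a constant $C_0$ with $(\xi, \eta)_o \leq C_0$ for any two of these four boundary points. Inside $\langle h_1, h_2 \rangle$ I would construct a family $g_1, \dots, g_N$ of loxodromic elements with pairwise disjoint fixed point sets, for example by conjugating, $g_i := h_1^i h_2 h_1^{-i}$, which yields attracting/repelling pairs $g_i^\pm$ that are pairwise separated by a Gromov product $\leq C_0 + O(\delta)$. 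I fix $C$ to be slightly larger than this quantity, depending only on $\delta$ and $C_0$ (and hence only on $\varepsilon$ once $N \geq 1/\varepsilon$ is chosen).

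Next, for a large integer $M$, set $s_i := g_i^M$. Taking $M$ large gives $d(o, s_i o) \geq D$, so condition (3) holds. By north--south dynamics, $s_i o$ lies within Gromov product $O(\delta)$ of $g_i^+$, and $s_i^{-1} o$ lies within $O(\delta)$ of $g_i^-$. For condition (1), given $x, y \in X$, observe that when $d(o, s o)$ is large compared to $d(o, y)$ one has $(so, sy)_o \geq d(o, so) - d(o, y) - \delta$, so by $\delta$-hyperbolicity $(x, sy)_o \leq (x, so)_o + O(\delta)$. The separation of the $g_i^+$ then ensures at most one index $i$ can satisfy $(x, s_i o)_o > C - O(\delta)$, so the bad fraction is at most $1/N \leq \varepsilon$. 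Condition (2) is identical using the $g_i^-$. Finally, each $s_i$ is a word of uniformly bounded length in $\{h_1^{\pm 1}, h_2^{\pm 1}\}$, hence lies in $\mathrm{supp}(\mu^{*\ell})$ for a single $\ell$ (equalizing word lengths, one can pad with a fixed element of $\mathrm{supp}(\mu)$ and its inverse, using that $h_1, h_2 \in \mathrm{supp}(\mu^{*k})$).

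The main obstacle I anticipate is the order of quantifiers: $C$ must be chosen before $D$, so the dispersion of $\{s_i o\}$ at scale $C$ must be arranged independently of how far the elements translate $o$. This is handled by noting that the separation constant $C_0$ of the fixed points $g_i^\pm$ depends only on the chosen loxodromics $h_1, h_2$ and their combinatorial arrangement, not on the exponent $M$; thus raising $M$ to achieve (3) leaves the dispersion used for (1), (2) untouched. The secondary technical point is the uniformity in $(x, y)$: the reduction $(x, sy)_o \approx (x, so)_o$ requires $d(o, so)$ to dominate $d(o, y)$, which is automatic once $D$ is much larger than a quantity controlled by the Schottky counting but must be made quantitative; this is the content of applying Lemma~\ref{L:canoe} style local-to-global alignment to the geodesics $[o, x]$, $[o, so]$, $[so, sy]$.
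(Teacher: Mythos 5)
The paper does not prove this lemma; it cites Gou\"ezel's \cite[Corollary 3.13]{pivot}, whose construction is close in spirit to yours (high powers of loxodromic elements with pairwise separated fixed points, found inside a sub-semigroup of $\langle\mathrm{supp}\,\mu\rangle$). So the overall strategy is on the right track, but there are two genuine gaps.

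The central gap is the reduction $(x, s_iy)_o \approx (x, s_io)_o$, which you justify by assuming $d(o, s_io) \gg d(o,y)$. You flag this yourself, but the proposed repair (``$D$ much larger than a quantity controlled by the Schottky counting'') cannot work: the Schottky axioms quantify over \emph{all} $x, y \in X$, so $d(o,y)$ is unbounded while $D$ is fixed, and no choice of $D$ dominates every $y$. The correct mechanism is a ping--pong count on the \emph{repelling} side, which is uniform in $y$. Concretely, using the isometry identity $(g^M y, g^M o)_o = d(o, g^M o) - (y, g^{-M}o)_o$, one sees that whenever $(y, g_i^{-M}o)_o$ is bounded (say $\leq C_0+\delta$), the image $g_i^M y$ has large Gromov product with $g_i^{M}o$, hence with $g_i^+$, no matter how large $d(o,y)$ is. Since the points $g_i^{-M}o$ are pairwise separated at scale $C_0$, at most one index $i$ fails this for a given $y$; and independently, at most one index $i$ has $(x, g_i^+)_o$ large. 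Thus at most two indices are bad, giving the fraction $\leq 2/N \leq \varepsilon$. Your version, which discards $y$ entirely by a size comparison, does not see this and cannot be repaired by tuning $D$.

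The second gap concerns the use of inverses. The non-elementarity hypothesis is on the \emph{semigroup} generated by $\mathrm{supp}\,\mu$, and inverses of support elements need not lie in any $\mathrm{supp}\,\mu^{*m}$. Your candidate family $g_i := h_1^i h_2 h_1^{-i}$ and your padding trick (``pad with a fixed element of $\mathrm{supp}(\mu)$ and its inverse'') both require inverses, so the resulting $s_i = g_i^M$ need not lie in any convolution power of $\mu$ --- this fails already when $\mathrm{supp}\,\mu$ generates a free semigroup of rank $2$. One must build the independent loxodromics from positive words only, and equalize convolution powers by right-multiplying by \emph{positive} words in $\mathrm{supp}\,\mu$ rather than by $g g^{-1}$. (A minor point, correctly handled in your write-up: with $g_i = h_1^i h_2 h_1^{-i}$ the fixed points $g_i^\pm = h_1^i h_2^\pm$ accumulate at $h_1^+$, so the separation constant $C_0$, and hence $C$, grows with $N \approx 1/\varepsilon$; this is allowed by the quantifier order but should be stated.)
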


Then by setting $N =  2 \ell$ we can write the decomposition $ \mu ^{*N} = \alpha \mu _{S} ^{*2} + (1- \alpha) \uptau $ where $ \mu _{S} $ is the uniform measure on some $ (\varepsilon, C, D) $-Schottky set. We choose $ D $ sufficiently large depending on $ \delta $ and $ C $ (for instance, $ D > 100C+100\delta+1$ suffices).
	  
\subsection{Alternating random walks and stopping times}	  
	  
\begin{prop} \label{P:alternate}
Let $\mu$ be a non-elementary probability measure on $G$,
a countable set of isometries of a Gromov-hyperbolic space. For any $ \varepsilon > 0 $, there exists $ C > 0 $ such that for any $ D > 0 $, there exists a probability measure $\theta$ on $G$ with the following properties:
\begin{enumerate}
	\item There exists a $ (\varepsilon, C, D)$-Schottky set $S$ such that $\theta = \kappa * \mu^{*2}_S$, where $\mu_S$ is the uniform probability measure on $S$. 
\item The Poisson boundary of $\theta$ is the same as the one of $\mu$.
\item If $\mu$ has finite entropy, then the entropy of $\theta$ is finite.
\end{enumerate}
\end{prop}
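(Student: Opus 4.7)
My approach is to produce $\theta$ as the one-step distribution of a subordinated random walk, where the subordinating randomized stopping time is chosen so that the final block of $N$ increments is always a double Schottky product.

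First, I would apply Lemma~\ref{L:Schottky} to obtain, for the given $\varepsilon>0$, a constant $C>0$ such that for every $D>0$ there exist an integer $\ell$ and an $(\varepsilon,C,D)$-Schottky set $S\subset\mathrm{supp}(\mu^{*\ell})$. Setting $N=2\ell$ delivers the decomposition $\mu^{*N}=\alpha\mu_S^{*2}+(1-\alpha)\tau$ already recorded in the excerpt, for some $\alpha\in(0,1]$ and some probability measure $\tau$ on $G$. I then enlarge the sample space carrying the $\mu$-walk by an independent sequence of $\mathrm{Bernoulli}(\alpha)$ labels indexed by blocks of $N$ consecutive increments, coupled so that conditional on its label each block has distribution $\mu_S^{*2}$ (label $1$) or $\tau$ (label $0$). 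Letting $G\sim\mathrm{Geom}(\alpha)$ be the index of the first $1$-labelled block and $T:=NG$, the element $w_T$ factors as an independent product of a $\tau$-random walk of geometric length followed by a single $\mu_S^{*2}$-step, which yields
\[\theta:=\mathrm{law}(w_T)=\kappa*\mu_S^{*2},\qquad \kappa:=\sum_{k\geq 0}\alpha(1-\alpha)^k\tau^{*k},\]
establishing property (1).

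For property (2), I would invoke the standard principle that an almost surely finite randomized stopping time $T\geq 1$ for a $\mu$-walk produces a subordinated measure $\mu_T$ whose Poisson boundary agrees with that of $(G,\mu)$: the $\theta$-walk realises an infinite subsequence of the original $\mu$-trajectory and therefore converges to the same $\mu$-boundary point, while the auxiliary Bernoulli randomness is tail-trivial and so contributes nothing to the tail $\sigma$-algebra. This fits the framework already cited from \cite{BG-semi} and appears in earlier work of Kaimanovich and Forghani on induced random walks. For property (3), I would use the crude estimate
\[H(\theta)\leq H(T)+H(w_T\mid T)\leq H(G)+(\mathbb{E}[G]-1)H(\tau)+H(\mu_S^{*2}),\]
where $G$ has finite geometric entropy and mean $1/\alpha$, $H(\mu_S^{*2})\leq 2\log\#S<\infty$ since $S$ is finite, and $H(\tau)\leq NH(\mu)/(1-\alpha)<\infty$ by concavity of entropy applied to the mixture decomposition of $\mu^{*N}$.

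The main obstacle will be making (2) airtight. The boundary-identification argument for randomized stopping times is folklore, but must be verified against the precise definitions in Section~\ref{sec:bound}; I would either cite the randomized-stopping-time theorem in the exact formulation used in \cite{BG-semi}, or give a short direct verification by producing a $G$-equivariant measurable bijection between $\mu$-tail classes and $\theta$-tail classes, exploiting that almost every path contains infinitely many $1$-labelled blocks so that asymptotic equivalence of the subordinated walks forces asymptotic equivalence of the full $\mu$-walks after a suitable shift.
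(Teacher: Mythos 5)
Your proposal is correct and follows essentially the same route as the paper: you invoke Lemma~\ref{L:Schottky} to get the decomposition $\mu^{*N}=a\mu_S^{*2}+(1-a)\tau$, define the identical geometric mixture $\kappa=\sum_{k\geq 0}a(1-a)^k\tau^{*k}$, interpret $\theta=\kappa*\mu_S^{*2}$ as the law of the walk at a randomized stopping time, and reduce (2) to the stopping-time invariance of the Poisson boundary from \cite{BG-semi}. The only difference is that for (3) you supply a short self-contained entropy estimate (subadditivity of entropy under convolution plus concavity applied to the mixture) where the paper simply cites \cite[Theorem 2.4]{Forghani}; this is a legitimate replacement and not a genuine departure.
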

\begin{proof}
	Consider the decomposition from our remark after Lemma \ref{L:Schottky}:
$\mu^{*N} = a \mu^{*2}_S + (1-a) \uptau$ for some probability measure $\uptau$, $N$, and $0 < a < 1$.  Define 
$$
\kappa := a\sum_{n=0}^\infty (1-a)^n \uptau^{*n}.
$$
Part (2) and (3) follow from a stopping time trick (we stop the random walk when we select an increment from $S^2$) \cite{Forghani, BK2013}. 
In particular, part (2) follows from \cite[Proposition 6.3]{BG-semi} and part (3) follows from \cite[Theorem 2.4]{Forghani}.
\end{proof}

From now on, we shall call 
a measure of type 
$$\theta := \kappa * \mu_S^{*2},$$ 
where $\mu_S$ is the uniform measure on some Schottky set $S$, and $\kappa$ 
any probability measure on $G$, an \emph{alternating measure}. 

\subsection{Pivots: general definition}

We now define pivotal times for general group actions on hyperbolic spaces, following \cite[Section 4]{pivot}. 
Let $\omega \in \Omega$ be a sample path, and $n \geq 0$ an integer. 
First we set $ P _{0} := \varnothing $. Given $ P _{n-1} $, we define $ P _{n} $ inductively as follows. Let $ k = k(n) $ be the last pivotal time before $ n $. Write the random walk induced by $ \theta = \kappa * \mu _{S} ^{*2} $ as $ w _{0} s _{1} ... w _{n-1} s _{n} w _{n} $ where $ w _{i} \sim \kappa $ and $ s _{i} = a _{i} b _{i} $ for $ a _{i}, b _{i} \sim \mu _{S} $. 
We set 
\begin{align*} 	    
	y _{i} ^{-} &:= w _{0} s _{1} ... s _{i-1} w _{i-1} \\ 
	  y _{i} &:= y _{i} ^{-} a _{i}, \\
  y ^{+} _{i} &:= y _{i} ^{-} a _{i} b _{i} .
\end{align*}

We say that the \emph{local geodesic condition} is satisfied at time $n $ if 
$$ (y _{k}, y _{n}) _{y _{n} ^{-}} \leq C, \qquad (y _{n} ^{-}, y _{n} ^{+}) _{y _{n}} \leq C, \qquad \textup{ and }(y _{n}, y _{n+1} ^{-}) _{y _{n} ^{+}} \leq C.$$ 
If the local geodesic condition is satisfied at time $ n $, we set $P _{n} := P _{n-1} \cup \{ n \} $. Else, we backtrack to the largest pivotal time $ m \in P _{n-1} $ such that $ y _{n+1} ^{-} $ is in the $ (C+\delta) $-chain-shadow of $ y _{m} ^{+} $ seen from $ y _{m} $. In other words, we set $ P _{n} := P _{n-1} \cap \{ 1, ..., m \} $.

Recall the following lemma  \cite[Lemma 4.4]{pivot}.

\begin{lem} \label{L:Chain}
Let $ P _{n} = \{ k _{1} < ... < k _{p} \} $. Then the sequence $ e, y _{k _{1}}, y _{k _{2}} ^{-}, y_{k_2}, \ldots, y _{k _{p}}, y _{n+1} ^{-} $ is a $ (2C+4\delta, D-2C-3\delta) $-chain. 
\end{lem}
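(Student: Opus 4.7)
The plan is to verify the two defining conditions of a $(2C+4\delta, D-2C-3\delta)$-chain directly, using as input the three local geodesic conditions that hold at each $k_i \in P_n$ by the very construction of the pivotal set: namely,
\[
(y_{k_{i-1}}, y_{k_i})_{y_{k_i}^-} \leq C, \quad (y_{k_i}^-, y_{k_i}^+)_{y_{k_i}} \leq C, \quad (y_{k_i}, y_{k_{i+1}}^-)_{y_{k_i}^+} \leq C,
\]
where we adopt the conventions $y_{k_0} := e$ and $y_{k_{p+1}}^- := y_{n+1}^-$ so that the first and last segments are treated uniformly with the interior ones.

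For the distance bounds, the Schottky edges $y_{k_i}^- \to y_{k_i}$ give $d(y_{k_i}^-, y_{k_i}) = d(o, a_{k_i} o) \geq D$ directly from the definition of a Schottky set. For each inter-pivot edge $y_{k_i} \to y_{k_{i+1}}^-$, expanding the condition $(y_{k_i}, y_{k_{i+1}}^-)_{y_{k_i}^+} \leq C$ yields
\[
d(y_{k_i}, y_{k_{i+1}}^-) \geq d(y_{k_i}, y_{k_i}^+) + d(y_{k_i}^+, y_{k_{i+1}}^-) - 2C \geq D - 2C,
\]
which exceeds the required $D-2C-3\delta$; the initial edge $e \to y_{k_1}$ and the final edge $y_{k_p} \to y_{n+1}^-$ follow the same template under the above conventions.

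For the Gromov product bounds, the ``direct'' triples centered at $y_{k_i}^-$ are immediate, since the first local geodesic condition already gives $(y_{k_{i-1}}, y_{k_i})_{y_{k_i}^-} \leq C \leq 2C + 4\delta$. The heart of the proof is the ``bridging'' triple centered at $y_{k_i}$, for which no single local condition is based at the right vertex. Here I will apply the four-point reformulation of $\delta$-hyperbolicity, $d(a,b) + d(c,d) \leq \max\{d(a,c)+d(b,d),\, d(a,d)+d(b,c)\} + 2\delta$, to the quadruple $(a,b,c,d) = (y_{k_i}^-, y_{k_i}, y_{k_i}^+, y_{k_{i+1}}^-)$. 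Adding the two local geodesic inequalities
\[
d(y_{k_i}^-, y_{k_i}^+) \geq d(y_{k_i}^-, y_{k_i}) + d(y_{k_i}, y_{k_i}^+) - 2C, \qquad d(y_{k_i}, y_{k_{i+1}}^-) \geq d(y_{k_i}, y_{k_i}^+) + d(y_{k_i}^+, y_{k_{i+1}}^-) - 2C,
\]
together with $d(y_{k_i}, y_{k_i}^+) \geq D \gg C$, shows that $d(a,c)+d(b,d)$ strictly exceeds $d(a,b)+d(c,d)$, so the four-point inequality forces $d(a,c)+d(b,d) \leq d(a,d)+d(b,c) + 2\delta$; a short rearrangement then yields $(y_{k_i}^-, y_{k_{i+1}}^-)_{y_{k_i}} \leq C + \delta$, comfortably within $2C+4\delta$. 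The first triple $(e, y_{k_1}, y_{k_2}^-)$ is handled by iterating the same four-point trick, first through $y_{k_1}^-$ and then through $y_{k_1}^+$.

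The main obstacle is precisely this bridging step at $y_{k_i}$: since the two available local conditions are based at two different points ($y_{k_i}$ and $y_{k_i}^+$), one must chain them via $\delta$-hyperbolicity rather than by a one-line triangle estimate. The argument succeeds only because the Schottky hypothesis supplies the gap $d(y_{k_i}, y_{k_i}^+) \geq D$, which determines which pair of sums dominates in the four-point inequality; this is the concrete reason the setup requires $D$ to be chosen sufficiently large relative to $C$ and $\delta$.
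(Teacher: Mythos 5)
Note first that the paper cites this result from Gou\"ezel (\cite[Lemma 4.4]{pivot}) and does not prove it; I am assessing your argument on its own terms.

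There is a genuine gap in the first display. You assert, for each $k_i \in P_n$, the local geodesic condition in the form $(y_{k_i}, y_{k_{i+1}}^-)_{y_{k_i}^+} \leq C$. But the third part of the local geodesic condition at time $k_i$ controls $(y_{k_i}, y_{k_i + 1}^-)_{y_{k_i}^+}$, i.e.\ the Gromov product against the position $y_{k_i+1}^-$ at the \emph{next time step}, not against $y_{k_{i+1}}^-$ at the \emph{next pivotal time}. These agree only when $k_{i+1}=k_i+1$, which fails whenever backtracking occurred between $k_i$ and $k_{i+1}$. In that case the construction guarantees only the weaker statement that $y_{k_{i+1}}^-$ lies in the $(C+\delta)$-chain-shadow of $y_{k_i}^+$ seen from $y_{k_i}$, and extracting the needed Gromov product and distance bounds from that is exactly where the content of the lemma lies. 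Gou\"ezel's proof handles it by induction on $n$, appending $y_n^-, y_n, y_{n+1}^-$ to the chain when the local geodesic condition holds at $n$ and using the chain-shadow guarantee to re-attach $y_{n+1}^-$ after a backtrack; your direct, induction-free verification never invokes the chain-shadow and never accounts for what happens between consecutive pivots, so it does not close this case. A tell-tale symptom is that you obtain $D-2C$ on the inter-pivot edges, cleaner than the stated $D-2C-3\delta$: the extra $3\delta$ is the loss incurred when passing through the chain-shadow, and getting a sharper constant than the lemma claims signals that the hypothesis you fed in is stronger than what the construction actually supplies. The remaining ingredients---the Schottky distance bound on $y_{k_i}^-\to y_{k_i}$, the four-point-inequality bridging through $y_{k_i}^+$, and the observation that $D$ being large relative to $C$ and $\delta$ decides which sum dominates---are sound given the (false) hypothesis and would transplant into a correct proof once the right input is supplied.
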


This combined with Lemma \ref{L:canoe} implies that  
$ (e, y _{n+1} ^{-}) _{y _{k _{i}}^-} \leq 2C+6\delta $  and $ (e, y _{n+1} ^{-}) _{y _{k _{i}}} \leq 2C+6\delta $. In other words, both $y_{k_i}^-$ and $ y _{k _{i}} $ lie within a uniformly bounded neighbourhood of the geodesic $ [e, y _{n+1} ^{-}] $.		

\begin{defin}
We say that a time $ k $ is \emph{pivotal from infinity} if $ k \in P _{m} $ for all $ m \geq k$.  
\end{defin}

If $ k _{i} $ is a pivotal time from infinity and $\xi := \lim_{i \to \infty} y_{k_i} \in \partial G$ exists, then $ y _{k _{i}} $ lies within a uniformly bounded neighbourhood of any geodesic connecting $ e $ and $ \xi \in \partial G $; that is, there exists a constant  $M := 2 C + 9 \delta$, which does not depend on $\xi$, such that, for any geodesic $\gamma$ between $e$ and $\xi \in \partial G$ and any pivot 
from infinity $y_{k_i}$, we have 
\begin{equation} \label{E:fellow-t}
\max \{  d(y_{k_{i}}^-, \gamma), \ d(y_{k_{i}}, \gamma) \} \leq M. 
\end{equation}

\section{The Entropy Argument}\label{sec:pin-down}

\subsection{Defining the partitions}

Let $G$ be a non-elementary hyperbolic group, and fix a choice of finite generating set $ \Sigma $. Let $ X = \Gamma (G, \Sigma)$ be the corresponding Cayley graph and fix a geodesic $ \xi $ connecting $ e $ to $ \partial G$. Let us consider an alternating measure $\theta = \kappa  * \mu_S^2$ on $G$, and let $(w_n)$ be a sample path for a random walk driven by $\theta$. 

Let $\alpha$ be a constant to be fixed later.  All the following quantities will depend on an infinite sample path $\omega \in \Omega$.

\begin{defin}
Let $n \geq 1$. For each $0 \leq k \leq \lceil \frac{n}{\alpha} \rceil$, let us denote the time interval 
$$I_{k, \alpha} := (\alpha (k-1), \alpha k] \cap [0, n].$$ 
Moreover, let $t_k$ be the time of the first pivot from infinity inside $I_{k, \alpha}$, 
if one such pivot exists; in this case, let us denote as 
$$p_k := w_{t_k}$$ the pivotal element; 
for $k = 0$, let $p_0 := e$; if $k > 0$ and there is no pivot inside $I_{k, \alpha}$, let us set $t_k := -1$.
\end{defin}

We denote as $\mathcal{T}_n^\alpha := (t_1, \dots, t_{n/\alpha})$ and we call it the set of \emph{pivotal times}.  

\medskip
Let $L$ be a constant to be fixed later. 

\begin{defin}
We say an interval $I_{k, \alpha}$ for $1 \leq k < \frac{n}{\alpha}$ is \emph{$L$-good} if there is a pivot in $I_{k, \alpha}$, and if
$$d(w_{j-1}, w_{j}) \leq L \qquad \textup{for all } j \in I_{k, \alpha}.$$
Finally, by construction we declare $I_{0, \alpha} = \{ 0 \}$ to be $L$-good.
Otherwise, the interval is $L$-\emph{bad}. 
Note that by construction we define the last interval, that is the one containing $n$, to be $L$-bad. 
\end{defin}

Let us define the \emph{good distance} as the sum of the distances between consecutive good pivots:
$$D_n^{\alpha,L} := \sum_{\stackrel{I_{k, \alpha}, I_{k+1, \alpha} }{\textup{ $L$-good}}} d(p_k, p_{k+1})$$
where the sum is over all $k$ with $0 \leq k < \lceil n/\alpha \rceil$ such that both $I_{k, \alpha}$ and $I_{k+1,\alpha}$ are $L$-good intervals, and $d$ is the word length.

\begin{defin}
For each $L$-bad interval $I_{k, \alpha}$, let $J_{k ,\alpha} := (I_{k-1, \alpha} \cup I_{k, \alpha} \cup I_{k+1, \alpha}) \cap [0, n]$ and define as $b_k$ the finite sequence of group elements
$$b_k := (g_i)_{i \in J_{k, \alpha}}.$$
Let us define the $(\alpha,L)$-\emph{bad intervals} $\mathcal{B}_n^{\alpha,L} $ as the sequence
$$\mathcal{B}_n^{\alpha,L}  := ( b_{k_1}, \dots, b_{k_s}),$$
where $I_{k_1, \alpha}, \dots, I_{k_s, \alpha}$ are the $L$-bad intervals up to time $n$. 
\end{defin}

The key property of bad intervals is that they are somewhat rare, since pivots are abundant. 
The following lemma makes this precise. 

\begin{lem} \label{L:bad-are-rare}
Consider the random walk driven by an alternating measure $\theta$ on a non-elementary hyperbolic group $G$. 
For any $\delta > 0$ there exists $\alpha_0 > 0$ such that for any $\alpha \geq \alpha_0$ 
there exists $L \geq 1$ for which we have
$$\mathbb{P}\Big(I_{k, \alpha} \textup{ is $L$-bad}\Big) \leq \delta \qquad \textup{for any }n \geq 1, \  \textup{for any }k < \left \lceil \frac{n}{\alpha} \right \rceil.$$
\end{lem}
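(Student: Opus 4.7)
The plan is to bound the probability of $I_{k,\alpha}$ being $L$-bad by splitting it into two failure modes and handling each. By definition, $I_{k,\alpha}$ is $L$-bad iff either \textbf{(A)} no pivot from infinity lies in $I_{k,\alpha}$, or \textbf{(B)} some increment $d(w_{j-1}, w_j)$ with $j \in I_{k,\alpha}$ exceeds $L$. A union bound reduces the problem to controlling $\mathbb{P}(A)$ and $\mathbb{P}(B)$ by $\delta/2$ each. Note the order of quantifiers in the statement allows $\alpha_0$ to be chosen first (addressing (A), which only depends on $\alpha$), then $L$ (addressing (B), which depends on both $\alpha$ and the step distribution).

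For (B), the increments $d(w_{j-1}, w_j)$ are i.i.d.\ copies of $|g|$ with $g \sim \theta$, since the walk has independent increments drawn from $\theta$. Because $\theta$ is a probability measure on the countable group $G$, we have $\mathbb{P}_\theta(|g| > L) \to 0$ as $L \to \infty$, with no moment assumption required. Given $\alpha \ge \alpha_0$, pick $L$ so that $\mathbb{P}_\theta(|g|>L) < \delta/(2\alpha)$; a union bound over the at most $\alpha$ steps in $I_{k,\alpha}$ yields $\mathbb{P}(B) < \delta/2$, uniformly in $n$ and $k$.

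For (A), I would invoke the pivot density results of \cite{pivot}. The construction of $P_n$ is Markovian (each $P_n$ is built from $P_{n-1}$ together with the new increment $s_n$), and by Lemma \ref{L:Schottky} the Schottky parameter $\varepsilon$ governing $\mu_S$ can be taken as small as desired; this forces the local geodesic condition at each step to hold with probability close to $1$, while an examination of the backtracking rule shows that backtrack lengths have exponentially decaying tails. Consequently the pivot-from-infinity times stochastically dominate a renewal process whose inter-arrival law has bounded exponential tail, and the tail depends only on $\theta$, not on where the interval $I_{k,\alpha}$ sits in $[0,n]$. In particular there exists a function $\psi(\alpha) \to 0$ with
$$\mathbb{P}(A) \;=\; \mathbb{P}\bigl(I_{k,\alpha}\text{ contains no pivot from infinity}\bigr) \;\leq\; \psi(\alpha),$$
and we pick $\alpha_0$ so that $\psi(\alpha) < \delta/2$ for all $\alpha \geq \alpha_0$. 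Combining with the bound on $\mathbb{P}(B)$ gives the claim.

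The main obstacle is the uniform-in-$(n,k)$ control on (A): one must rule out that the chance of a "pivot drought" on $I_{k,\alpha}$ worsens when the interval is far out in the walk, or when the past of the walk is atypical. The key is that, by construction, the distribution of the next pivot-from-infinity arrival after a given time is driven only by the Markov dynamics of $(P_n)$ on i.i.d.\ increments together with the Schottky property of $\mu_S$; both are independent of the index $k$, so the tail bound on inter-pivot gaps extracted from \cite{pivot} transfers verbatim to every $I_{k,\alpha}$.
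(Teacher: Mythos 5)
Your decomposition matches the paper's exactly: split the $L$-bad event into (A) no pivot from infinity in $I_{k,\alpha}$ and (B) some increment in $I_{k,\alpha}$ exceeding $L$, then union bound. Your handling of (B) is the paper's argument verbatim (i.i.d.\ increments, tightness of $\theta$, union bound over $\alpha$ steps), and your observation about the order of quantifiers (choose $\alpha_0$ first for (A), then $L$ depending on $\alpha$ for (B)) is correct.

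The gap is in (A), and it is a genuine one. You assert that ``the pivot-from-infinity times stochastically dominate a renewal process whose inter-arrival law has bounded exponential tail'' and that this follows because the Schottky parameter $\varepsilon$ ``can be taken as small as desired.'' Two problems. First, by this point in the argument the alternating measure $\theta = \kappa * \mu_S^{*2}$ is already fixed, so $\varepsilon$ is not a free parameter; you cannot retroactively shrink it, and the paper does not do so. Second, and more importantly, what \cite{pivot} actually provides (used as Eq.~\eqref{E:pivots}, i.e.\ \cite[Lemma~4.9]{pivot}) is a stochastic domination for the increments of $\#P_n$, the \emph{time-$n$} pivot count, which is a dynamic quantity that can shrink by backtracking; it is not directly a statement about pivots from infinity. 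The step you omit is the combinatorial observation that if $I_{k,\alpha} = (m, m+\alpha]$ contains no pivot from infinity, then there must exist $\beta \geq \alpha$ with $\#P_{m+\beta} \leq \#P_m$ (because any candidate pivot created inside $I_{k,\alpha}$ must eventually be erased by backtracking, and erasure is detected by the count dropping back to its time-$m$ value). Only after this translation can one sum the geometric tail $\sum_{\beta \geq \alpha} \mathbb{P}(\sum_{i=1}^\beta U_i \leq 0) \leq \sum_{\beta\geq\alpha} (\mathbb{E}[e^{-tU_1}])^\beta$ and make it $< \delta/2$ for $\alpha$ large. Without that bridge, the ``renewal process for pivots from infinity'' is an assertion of the conclusion rather than a derivation, and your uniformity-in-$(n,k)$ discussion is also better grounded in the fact that \cite[Lemma~4.9]{pivot} is itself uniform in the starting time $m$, rather than in informal Markov-chain considerations on the complicated state space of pivot sets.
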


\begin{proof}
Recall that, for any $n, j, k \geq 0$, Gou\"ezel's paper \cite[Lemma 4.9]{pivot} shows 
\begin{equation} \label{E:pivots}
\mathbb{P}\Big(\#P_{n+j} \geq \# P_{n} + k \Big) \geq \mathbb{P}\left( \sum_{i = 1}^j U_i \geq k \right)
\end{equation}
where $(U_i)$ are i.i.d. random variables with $\mathbb{E}[U_1] > 0$ and  $\mathbb{E}[e^{- t_0 U_1}] < \infty$ for some $t_0 > 0$. Hence, there exists $t > 0$ with $\mathbb{E}[e^{- t U_1}] < 1$.
Standard large deviation estimates yield
$$ \mathbb{P}\left(\sum_{i = 1}^\beta U_i  \leq 0 \right) = \mathbb{P} \left(e^{-t \sum_{i = 1}^\beta U_i} \geq  1 \right) \leq  \mathbb{E}[e^{-t \sum_{i = 1}^\beta U_i}] =  (\mathbb{E}[e^{-t U_1}])^{\beta}.$$
Hence, we can apply it to our situation and obtain, setting $m = \alpha(k-1)$, 
\begin{align*}
\mathbb{P}\Big(\textup{there is no pivot from }\infty\textup{ in }I_{k, \alpha} \Big) & \leq \mathbb{P} \Big(\exists \beta \geq \alpha \ : \ \#P_{m+\beta} \leq \# P_{m}  \Big) \\
& \leq \sum_{\beta  = \alpha}^\infty  \mathbb{P}\Big(\#P_{m +\beta} \leq \# P_{m} \Big) \\
& \leq \sum_{\beta = \alpha}^\infty \mathbb{P}\Big(\sum_{i = 1}^\beta U_i  \leq 0 \Big) \\
& \leq \sum_{\beta = \alpha}^\infty  (\mathbb{E}[e^{-t U_1}])^{\beta} =  \frac{ (\mathbb{E}[e^{-t U_1}])^{\alpha}}{1 - \mathbb{E}[e^{-t U_1}]}  < \delta/2
\end{align*}
if $\alpha$ is large enough. 
Moreover, one can choose $L$ large enough so that 
$$\mathbb{P}\Big(d(e, g_1) \geq L \Big) \leq \frac{\delta}{2 \alpha}$$
hence, since the $(g_i)$ are i.i.d., 
\begin{align*}
\mathbb{P}\Big(d(w_{j-1}, w_{j})  \geq L \textup{ for some }j \in I_{k, \alpha} \Big) & \leq \sum_{j = 1}^\alpha \mathbb{P}\Big(d(w_{j-1}, w_{j}) \geq L \Big) \\
& \leq \sum_{j = 1}^\alpha \mathbb{P}\Big(d(e, g_{1}) \geq L\Big) \\
& \leq \alpha \cdot  \frac{\delta}{2 \alpha} = \frac{\delta}{2}
\end{align*}
hence, combining the two statements, we get the claim. 
\end{proof}

Given two partitions $\mathcal{A}$ and $\mathcal{B}$, we denote by $\mathcal{A} \vee \mathcal{B} := \{ a \cap b \ : \ a \in \mathcal{A}, b \in \mathcal{B} \}$ their join. When $\mathcal{A}=(A_i)$ and $\mathcal{B}=(B_i)$ are two measurable countable partitions on the path space, define $\P^\xi( A_i | B_j) :=\frac{\P^\xi(A_i \cap B_j)}{\P^\xi (B_j)}$. Then, we define  the associated conditional entropy as
$$
H_\xi(\mathcal A | \mathcal B) := - \sum_{i,j} \P^\xi (A_i \cap B_j)  \log \P^\xi(A_i| B_j), \mbox{ and } H_B(\mathcal A | \mathcal B ) := \int_B H_\xi(\mathcal A | \mathcal B ) \ d\lambda(\xi). 
$$

\begin{prop} \label{P:pin-down}
Let $G$ be a hyperbolic group, and $\partial G$ its hyperbolic boundary. Then for any $\alpha \geq 1, L \geq 1$ the join of the partitions $\mathcal{T}_n ^\alpha$, $ D_n^{\alpha,L}$, and $\mathcal{B}_n^{\alpha,L} $ pins down the location $w_n$ of the walk, in the following sense: 
\begin{equation} \label{E1-1}
H_{\partial G}(\mathcal{A}_n \vert  \mathcal{T}_n^\alpha \vee  D_n^{\alpha,L} \vee \mathcal{B}_n^{\alpha,L}) = o(n).
\end{equation}
\end{prop}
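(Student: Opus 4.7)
The plan is to show that, given the conditioning, the last good pivot $p_{k_r}$ is constrained to a ``tube'' around the geodesic $[e, \xi)$ containing only $O(n)$ group elements, which will yield conditional entropy $O(\log n) = o(n)$. The strategy has three steps: extract information from $\mathcal{B}_n^{\alpha,L}$, approximate $d(e, p_{k_r})$ via the pivot chain, and localize $p_{k_r}$ via inequality~\eqref{E:fellow-t}.

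First I would read off from $\mathcal{B}_n^{\alpha,L}$ all walk increments on every bad interval and its immediate neighbors. Since the final interval is declared bad, this fully determines $w_n$ once the last good pivot $p_{k_r}$ is known. Moreover, if $p_{k_i}$ and $p_{k_{i+1}}$ are two consecutive good pivots separated by at least one bad interval (i.e.\ $k_{i+1} > k_i + 1$), the walk is fully recorded between them, so the displacement $p_{k_i}^{-1} p_{k_{i+1}}$ and hence the distance $d(p_{k_i}, p_{k_{i+1}})$ are computable directly from the conditioning. Call $D_{\mathrm{bad}}$ the sum of these ``bad-stretched'' inter-pivot distances.

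Next I would view the pivot-from-infinity chain $e = p_0, p_{k_1}, \dots, p_{k_r}$ (together with its intermediate Schottky-step companions $p_{k_i}^{-}$) as a $(C', D')$-chain and apply Lemma~\ref{L:Chain} and Lemma~\ref{L:canoe}. Iterating the Gromov-product bound at each intermediate point gives
\[
\Bigl| d(e, p_{k_r}) - \sum_{i=0}^{r-1} d(p_{k_i}, p_{k_{i+1}}) \Bigr| = O(\delta \cdot N_g),
\]
where $N_g \le \lceil n/\alpha \rceil$ is the number of good pivots. The right-hand sum equals $D_n^{\alpha,L} + D_{\mathrm{bad}}$, both computable from the conditioning, so the distance $d(e, p_{k_r})$ is pinned down to within an interval of width $O(n/\alpha)$.

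Combined with inequality~\eqref{E:fellow-t}, which places $p_{k_r}$ within distance $M$ of the geodesic $[e, \xi)$, this forces $p_{k_r}$ into the intersection of the $M$-neighborhood of $[e, \xi)$ with a spherical shell of thickness $O(n/\alpha)$ around $e$. A straightforward count shows this intersection contains at most $|B(M)| \cdot O(n/\alpha) = O(n)$ group elements, so the conditional entropy of $p_{k_r}$, and hence of $w_n$, is at most $O(\log n) = o(n)$, for any fixed $\alpha, L \ge 1$. The main obstacle is the rigorous verification that the pivot-from-infinity chain inherits the Gromov-product bounds required to apply Lemma~\ref{L:canoe}, as it is only a subset of the full pivot set $P_n$ on which Lemma~\ref{L:Chain} is directly stated; this should follow from the defining property that pivots from infinity remain pivots at every later time, and hence inherit the chain structure from $P_m$ for every $m \ge k_r$, but carrying out this check carefully is the main technical step.
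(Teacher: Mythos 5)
Your proposal is correct and follows essentially the same route as the paper's proof: use $\mathcal{B}_n^{\alpha,L}$ together with $\mathcal{T}_n^\alpha$ to reconstruct the inter-pivot displacements across bad stretches, add these to $D_n^{\alpha,L}$ to estimate $d(e, p_{\mathrm{last}})$ up to an $O(n/\alpha)$ error, and then use the $M$-fellow-traveling of pivots with $[e,\xi)$ from Eq.~\eqref{E:fellow-t} to pin $p_{\mathrm{last}}$ down to $O(n/\alpha)\cdot \#B(2M)$ candidates, giving $H_\xi = O(\log n)$. The one cosmetic difference is that you derive the approximate additivity of consecutive pivot distances by iterating the Gromov-product bound from the chain/canoe lemmas, whereas the paper phrases the same estimate via the linear order of projections along the limit geodesic; both rest on the same chain structure, and your closing worry about pivots-from-infinity inheriting that structure is resolved, as you guessed, by the observation that they are eventually consecutive entries of $P_m$ for $m$ large (this is what the paper uses implicitly when it states Eq.~\eqref{E:fellow-t}).
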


\begin{proof}
Let us fix a boundary point $\xi \in \partial G$, and let us split the set $[0, \lceil n/\alpha \rceil ]$ of indices $k$ for $I_{k, \alpha}$ as a disjoint union of indices of $L$-good intervals and maximal chains of consecutive indices of $L$-bad intervals. 
Let $k_i, \dots, k_{i+r}$ be a maximal chain of consecutive indices of $L$-bad intervals, except for the last one (that is, so that 
$I_{k_{i+r}, \alpha}$ does not contain $n$).
Then $t^- := t_{k_{i-1}}$ and $t^+ := t_{k_{i+r+1}}$ are pivotal times, and  $p_-:= w_{t_{k_{i-1}}}$ and $p_+:= w_{t_{k_{i+r+1}}}$ are pivotal elements.
Consider the word 
$$W_i := g_{t^-+1}  \cdot \ldots \cdot g_n \cdot \ldots \cdot g_{t^+}.$$
Record $d_i := d(e,  W_i)$ the word length of $W_i$. 
Now, let us pick a geodesic ray $\gamma = [e, \xi)$ connecting the origin with $\xi$. Since every pivot lies in the $M$-neighborhood of $\gamma$ and there are at most $n/\alpha$ pivots, the sum 
$$ D_n^{\alpha,L} + \sum_{i \textup{ $L$-bad}} d_i$$
yields the distance from the origin of the last pivot $p_{last}$ before $w_n$, up to an additive error of at most $2 M n /\alpha$.
 
Recall the constant $M$, which depends only on $C$ and $\delta$, from Eq. \eqref{E:fellow-t}. Since by definition $p_{last}$ lies in an $M$-neighborhood of $\gamma$,
the point $p_{last}$ lies in the union of $\frac{4 n}{\alpha}$ balls of radius $2 M$ in $G$, hence there are at most $\frac{4 n}{\alpha} \cdot \#B(2M)$ choices for $p_{last}$. 

By definition, all intervals $I_{k, \alpha}$ after the last pivot are $L$-bad, so 
we also record 
$$W_{last} := g_{t^-+1} \cdot \ \dots \  \cdot g_n$$
hence we have 
$$w_n = p_{last} W_{last}$$
pinned down up to at most $\frac{4 n}{\alpha} \cdot \#B(2M)$ choices, hence 
$$H_\xi(\mathcal{A}_n \vert  \mathcal{T}_n^\alpha \vee  D_n^{\alpha,L}  \vee \mathcal{B}_n^{\alpha,L} ) \leq \log ((4n/\alpha) \cdot \#B(2M)) = o(n)$$
which, after integrating over $\xi$, proves the claim.
\end{proof}

\begin{prop}
Let $G$ be a hyperbolic group, let $\partial G$ be its hyperbolic boundary, 
and consider a random walk driven by the alternating measure $\theta := \kappa * \mu_S^{*2}$. 
If $\theta$ has finite entropy, then
for any $\epsilon > 0$, there exists $\alpha_0 > 0$ such that for any $\alpha \geq \alpha_0$ there exists $L \geq 1$ such that
\begin{equation} \label{E2-1}
	\limsup_{n} \frac{H( \mathcal{T}_n^\alpha \vee D_n^{\alpha,L}  \vee \mathcal{B}_n^{\alpha,L} )}{n} \leq \epsilon.
\end{equation}
\end{prop}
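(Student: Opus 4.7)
The plan is to bound the entropy of the join by subadditivity,
\[
H\bigl(\mathcal{T}_n^\alpha \vee D_n^{\alpha,L} \vee \mathcal{B}_n^{\alpha,L}\bigr) \leq H(\mathcal{T}_n^\alpha) + H(D_n^{\alpha,L}) + H(\mathcal{B}_n^{\alpha,L}),
\]
and to control the three terms separately. The first two are elementary. Each coordinate $t_k$ of $\mathcal{T}_n^\alpha$ takes at most $\alpha+1$ values, so $H(\mathcal{T}_n^\alpha) \leq \lceil n/\alpha \rceil \log(\alpha+1)$, contributing $\log(\alpha+1)/\alpha$ to $\limsup H/n$, which tends to $0$ as $\alpha \to \infty$. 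For $D_n^{\alpha,L}$, all increments inside an $L$-good interval have word length at most $L$, so consecutive good pivots are at distance at most $2\alpha L$; summing over at most $\lceil n/\alpha \rceil$ good pairs gives that $D_n^{\alpha,L}$ is a nonnegative integer bounded by $2nL$, hence $H(D_n^{\alpha,L}) \leq \log(2nL+1) = o(n)$.

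The heart of the argument is the bound for $H(\mathcal{B}_n^{\alpha,L})$. A direct application of Lemma \ref{L:entropy-restricted} with $Z = b_k$ valued in $G^{|J_{k,\alpha}|}$ would fail, because the threshold $\delta$ furnished by that lemma depends on the law of $Z$, and hence on $\alpha$. The plan is to circumvent this with a slot decomposition: for each pair $(k,i)$ with $1 \leq k \leq \lceil n/\alpha \rceil$ and $i \in J_{k,\alpha}$, define
\[
Y_k^{(i)} := \begin{cases} g_i & \text{if } I_{k,\alpha} \text{ is $L$-bad,} \\ \ast & \text{otherwise.} \end{cases}
\]
The family $(Y_k^{(i)})_{k,i}$ determines the set of $L$-bad indices together with all the increments composing each $b_k$, hence it refines $\mathcal{B}_n^{\alpha,L}$, and by subadditivity
\[
H(\mathcal{B}_n^{\alpha,L}) \leq \sum_{k,i} H(Y_k^{(i)}).
\]
Now each $Y_k^{(i)}$ is built from the single increment $Z = g_i$, whose law is $\theta$ regardless of $\alpha$, $k$, and $i$, together with the event $E = \{I_{k,\alpha} \text{ is $L$-bad}\}$. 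Since $H(\theta) < \infty$ by hypothesis, Lemma \ref{L:entropy-restricted} produces, for every $\epsilon' > 0$, a single threshold $\delta = \delta(\theta,\epsilon')$ such that $\mathbb{P}(E) < \delta$ forces $H(Y_k^{(i)}) < \epsilon'$, uniformly in $k$ and $i$. Lemma \ref{L:bad-are-rare} then provides an $\alpha_0$ and, for each $\alpha \geq \alpha_0$, an $L$ making $\mathbb{P}(I_{k,\alpha} \text{ bad}) < \delta$ uniformly in $k$. Since the number of $(k,i)$ pairs is at most $3\alpha \cdot \lceil n/\alpha \rceil \leq 3n + 3\alpha$, this gives $H(\mathcal{B}_n^{\alpha,L}) \leq (3n+3\alpha)\epsilon'$.

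Combining the three bounds,
\[
\limsup_n \frac{H\bigl(\mathcal{T}_n^\alpha \vee D_n^{\alpha,L} \vee \mathcal{B}_n^{\alpha,L}\bigr)}{n} \leq \frac{\log(\alpha+1)}{\alpha} + 3\epsilon'.
\]
Given $\epsilon > 0$, the parameters will be chosen in the following order: set $\epsilon' := \epsilon/6$, extract $\delta = \delta(\theta,\epsilon')$ from Lemma \ref{L:entropy-restricted}, then enlarge $\alpha_0$ if necessary so that $\log(\alpha+1)/\alpha < \epsilon/2$ for $\alpha \geq \alpha_0$ and so that Lemma \ref{L:bad-are-rare} applies with this $\delta$; finally, for each $\alpha \geq \alpha_0$, choose the corresponding $L$. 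The main obstacle is precisely the decoupling between the finite-entropy threshold in Lemma \ref{L:entropy-restricted} and the block size $\alpha$, and this is what forces the slot decomposition: without it, increasing $\alpha$ would demand an ever-smaller $\delta$, and the rarity estimate from Lemma \ref{L:bad-are-rare} could not be invoked uniformly.
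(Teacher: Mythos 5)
Your proof is correct and takes essentially the same approach as the paper: the ``slot decomposition'' $Y_k^{(i)}$ is precisely the paper's auxiliary variable $Y_{j,k} := g_j \mathbf{1}_{\{I_{k,\alpha}\text{ is $L$-bad}\}}$, and the observation that the threshold $\delta$ from Lemma \ref{L:entropy-restricted} can be extracted from $Z = g_1 \sim \theta$ once and for all, independently of $\alpha$, $k$, $j$, is exactly the key decoupling step the paper uses. One small omission: Lemma \ref{L:bad-are-rare} furnishes $\mathbb{P}(I_{k,\alpha}\text{ is $L$-bad}) \leq \delta$ only for $k < \lceil n/\alpha\rceil$, since the last interval is declared $L$-bad by convention and so has bad-probability $1$; for that single interval Lemma \ref{L:entropy-restricted} does not apply and one must fall back on $H(Y_k^{(i)}) \leq H(\theta)$, contributing an extra additive $3\alpha H(\theta)$ (the paper's $+3\alpha H(\theta)$ term), which of course still vanishes after dividing by $n$.
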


\begin{proof}
Since each pivotal time $t_i$ has at most $\alpha + 1$ choices, and we record at most $\frac{n}{\alpha}$ pivotal times, we get 
$$H(\mathcal{T}_n^\alpha) \leq \frac{n}{\alpha} \log(\alpha + 1).$$

Moreover, the value of $ D_n^{\alpha,L}$ is an integer between $0$ and $Ln$, hence 
$$
H( D_n^{\alpha,L}) \leq \log(Ln + 1).
$$
Let us consider for each $k \leq \lceil n/\alpha \rceil$ the variable 
$$Y_k := (g_{\alpha (k-1)+1}, \dots, g_{\alpha(k+2)}) \mathbf{1}_{\{I_{k, \alpha} \textup{ is $L$-bad}\}}$$
 and for each $j \leq n$, $k \leq \lceil n/\alpha \rceil$ the variable 
$$Y_{j, k} := g_j \mathbf{1}_{\{I_{k, \alpha} \textup{ is $L$-bad}\}}.$$

Fix $\epsilon > 0$, and let $\delta > 0$ be given by Lemma \ref{L:entropy-restricted} applied to $Z = g_1$, which has finite entropy 
by assumption. 
Now, by Lemma \ref{L:bad-are-rare}, if $\alpha$ and $L$ are sufficiently large, then
$$\mathbb{P}(I_{k, \alpha} \textup{ is a $L$-bad interval}) \leq \delta \qquad \textup{for every }k < \lceil n / \alpha \rceil.$$ 
Thus, we can set $E_k := \{ I_{k, \alpha} \textup{ is a $L$-bad interval} \}$ and, noting that all $(g_j)$ for $1 \leq j \leq n$ have the same distribution, we get, by Lemma \ref{L:entropy-restricted}, 
$$H(Y_{j, k}) \leq \epsilon \qquad \textup{for any } k \leq \lceil n/ \alpha \rceil, \textup{ any }j \in I_{k, \alpha}.$$
Thus, by summing over all $j$, we obtain 
$$H(Y_k) \leq \sum_{j \in I_{k, \alpha}} H(Y_{j, k}) \leq 3 \epsilon \alpha \qquad \textup{for any }k < \lceil n/\alpha \rceil $$
and by summing over all $k$, 
$$H(\mathcal{B}_n^{\alpha,L} ) \leq \sum_{k \leq \lceil n/\alpha \rceil } H(Y_k) \leq 3\epsilon n +  3 \alpha H(\theta).$$
Hence 
\begin{align*}
	\limsup_n \frac{H( \mathcal{T}_n^\alpha \vee D_n^{\alpha,L}  \vee \mathcal{B}_n^{\alpha,L} )}{n} & \leq 
	\limsup_{n} \frac{H( \mathcal{T}_n^\alpha)}{n} +  \limsup_n \frac{H( D_n^{\alpha,L})}{n} + \limsup_n \frac{H(\mathcal{B}_n^{\alpha,L} )}{n}\\
												& \leq \frac{\log(\alpha+1)}{\alpha}  + 0 + 3 \epsilon
\end{align*}
so the $\limsup$ is bounded by $4 \epsilon$ if we choose $\alpha$ large enough. 
\end{proof}

\begin{thm} \label{T:low-entropy}
Let $G$ be a hyperbolic group, let $\partial G$ be its hyperbolic boundary, 
and consider a random walk driven by the alternating measure $\theta := \kappa * \mu_S^{*2}$. 
If $\theta$ has finite entropy,  then we have 
$$\lim_{n \to \infty} \frac{H_{\partial G}(\mathcal{A}_n )}{n} = 0.$$

\end{thm}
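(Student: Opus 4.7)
The plan is to combine the two previous propositions by the chain rule of entropy. Write $\mathcal{P}_n := \mathcal{T}_n^\alpha \vee D_n^{\alpha,L} \vee \mathcal{B}_n^{\alpha,L}$. For each boundary point $\xi$, the subadditivity of entropy applied to the partition $\mathcal{A}_n \vee \mathcal{P}_n$ yields
$$H_\xi(\mathcal{A}_n) \;\leq\; H_\xi(\mathcal{A}_n \vee \mathcal{P}_n) \;=\; H_\xi(\mathcal{P}_n) + H_\xi(\mathcal{A}_n \mid \mathcal{P}_n).$$
Integrating over $\xi \sim \nu$ on $\partial G$ and using the convexity inequality $H_{\partial G}(\mathcal{P}_n) \leq H(\mathcal{P}_n)$ recorded in Section~\ref{sec:conditional}, I obtain
$$H_{\partial G}(\mathcal{A}_n) \;\leq\; H(\mathcal{P}_n) + H_{\partial G}(\mathcal{A}_n \mid \mathcal{P}_n).$$

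Now fix $\epsilon > 0$. By the previous proposition, one may choose $\alpha$ large enough, and then $L$ large enough, so that
$$\limsup_{n\to\infty} \frac{H(\mathcal{P}_n)}{n} \;\leq\; \epsilon.$$
For that same choice of $\alpha$ and $L$, Proposition~\ref{P:pin-down} ensures
$$\frac{H_{\partial G}(\mathcal{A}_n \mid \mathcal{P}_n)}{n} \;\longrightarrow\; 0.$$
Dividing the inequality above by $n$ and passing to the limsup yields
$$\limsup_{n \to \infty} \frac{H_{\partial G}(\mathcal{A}_n)}{n} \;\leq\; \epsilon.$$
Since $\epsilon > 0$ is arbitrary and $H_{\partial G}(\mathcal{A}_n) \geq 0$, the $\limsup$ (and hence the limit) equals $0$.

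The proof is entirely a bookkeeping step: the substantive content is already packaged in the two propositions, which quantify respectively how much information the pivotal data adds and how much information it removes. The only point that requires care is making sure the quantifiers match, namely that the order ``choose $\alpha$, then $L$'' works simultaneously for both statements; but this is exactly the order in which Lemma~\ref{L:bad-are-rare} fixes the parameters, and Proposition~\ref{P:pin-down} holds for every $\alpha, L \geq 1$, so there is no conflict. No genuine obstacle arises here, apart from verifying that the convexity bound and chain rule carry through to conditional entropies with respect to the $\mu$-boundary $(\partial G, \nu)$, which is standard once the disintegration $\P = \int \P^\xi \, d\nu(\xi)$ is in place.
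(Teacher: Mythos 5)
Your argument is correct and is essentially identical to the paper's own proof: both fix $\epsilon > 0$, invoke \eqref{E2-1} to choose $\alpha$ and then $L$ so that $\limsup_n H(\mathcal{P}_n)/n \leq \epsilon$, combine this with Proposition~\ref{P:pin-down} via the chain rule and the convexity bound $H_{\partial G}(\mathcal{P}_n) \leq H(\mathcal{P}_n)$, and let $\epsilon \to 0$. Your remark about the order of quantifiers is accurate and matches the role the two propositions play in the paper.
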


\begin{proof}
Fix $\epsilon > 0$. By \eqref{E1-1} and \eqref{E2-1}, we can choose $\alpha, L$ large enough so that, if we denote 
$\mathcal{P}_n := \mathcal{T}_n^\alpha \vee D_n^{\alpha,L}  \vee \mathcal{B}_n^{\alpha,L} $, we have 
\begin{align*}
\limsup_{n\to \infty} \frac{H_{\partial G}(\mathcal{A}_n )}{n} & \leq \limsup_{n\to \infty} \frac{H_{\partial G}(\mathcal{A}_n \vee \mathcal{P}_n )}{n} \\
& \leq \limsup_{n\to \infty} \frac{H_{\partial G}(\mathcal{P}_n )}{n} + \limsup_{n\to \infty} \frac{H_{\partial G}(\mathcal{A}_n  \vert \mathcal{P}_n )}{n} \\ 
& \leq \limsup_{n\to \infty}  \frac{H(\mathcal{P}_n )}{n} + \limsup_{n\to \infty} \frac{H_{\partial G}(\mathcal{A}_n  \vert \mathcal{P}_n )}{n} \leq \epsilon
\end{align*}
from which the claim follows.
\end{proof}

As a corollary, the space of asymptote classes of geodesic rays emanating from the identity is a model for the Poisson boundary. 

\begin{proof}[Proof of Theorem \ref{T:main}]
By Proposition \ref{P:alternate}, there is a decomposition $\mu^{*N} = a \mu_S^{*2} + (1-a) \uptau$, 
with $0 < a < 1$ and $\mu_S$ uniform on a Schottky set $S$. Consider the random walk 
driven by the alternating measure $\theta := \kappa * \mu_S^{*2}$. 
Since the support of $\theta$ is non-elementary, the random walk converges to the hyperbolic boundary $\partial G$; 
let $\lambda$ be the corresponding hitting measure. 
By Theorem \ref{T:low-entropy}, for the random walk $(G, \theta)$ we have
$$\lim_{n \to \infty} \frac{H_{\partial G}(\mathcal{A}_n )}{n} = 0$$
hence by Theorem \ref{T:criterion}, the space $(\partial G, \lambda)$ 
is the Poisson boundary.  Finally, by Proposition \ref{P:alternate}, the Poisson boundary of $(G, \mu)$ equals the Poisson boundary of $(G, \theta)$, hence 
it is also given by $(\partial G, \lambda)$.
 \end{proof}

 \section{Actions on hyperbolic spaces with WPD elements}
 
 Let $G$ be a group of isometries of a hyperbolic metric space $(X, d)$. 
 We now extend the previous proof to the more general case that $G$ only contains a WPD element. 
 This includes a very large class of non-hyperbolic groups, such as relatively hyperbolic groups, right-angled Artin groups, mapping class groups, and $Out(F_n)$.  
 
Given $K > 0$ and $x, y \in X$, we define the joint coarse stabilizer as 
$$\textup{Stab}_K(x, y) := \{ g \in G \ : \ d(x, gx) \leq K \textup{ and }d(y, gy) \leq K \}.$$

\begin{defin} \label{D:WPD}
An element $g \in G$ is \emph{WPD} if it is loxodromic and for any $x \in X$, any $K \geq 0$ there exists $P \geq 1$ such that 
$\textup{Stab}_K(x, g^P x)$ is finite.
\end{defin}

The main theorem of this section is the following, which is also Theorem \ref{T:WPD-action-intro} in the introduction. 

 \begin{thm}  \label{T:WPD-action}
Let $ G $ be a countable group with a non-elementary action on a geodesic hyperbolic space $ X $ with at least one WPD element. Let $ \mu $ be a generating probability measure on $ G $ with finite entropy. Let $ \partial X $ be the hyperbolic boundary of $ X $, and let $ \nu $ be the hitting measure of the random walk driven by $ \mu $. Then the space $ (\partial X, \nu) $ is the Poisson boundary of $ (G, \mu) $.
\end{thm}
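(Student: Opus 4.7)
The plan is to run the entire argument of Section \ref{sec:pin-down} at the level of a group acting on a hyperbolic space, noting that the only place the hyperbolic group hypothesis was really used (as opposed to merely the existence of a non-elementary action on a hyperbolic space) is the finiteness of the ball $B(2M) \subset G$ invoked at the end of the proof of Proposition \ref{P:pin-down}. All other ingredients carry over verbatim: the alternating decomposition (Proposition \ref{P:alternate}), the pivot construction of Section \ref{sec:hyper}, the chain lemma (Lemma \ref{L:Chain}), the rarity of bad intervals (Lemma \ref{L:bad-are-rare}), and the entropy bookkeeping for $\mathcal{T}_n^\alpha$, $D_n^{\alpha,L}$, and $\mathcal{B}_n^{\alpha,L}$, since these are proved in \cite{pivot} at that level of generality and never use local compactness of $G$.

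First, I would upgrade the Schottky set $S$ produced by Lemma \ref{L:Schottky} so that every element $s \in S$ is WPD. Because the action is non-elementary and contains a WPD element, a Bestvina--Fujiwara style argument produces infinitely many independent WPD loxodromics in the semigroup generated by the support of $\mu$, and one can execute the $(\varepsilon, C, D)$-Schottky construction of \cite{Sisto, pivot} inside this WPD reservoir. By raising elements to a sufficiently high power if necessary, one obtains a uniform constant
\[
N := \max_{s \in S} \#\,\textup{Stab}_{4M}(o, s \cdot o) < \infty,
\]
where $M = 2C + 9\delta$ is the fellow-traveling constant from Eq.~\eqref{E:fellow-t}.

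Next, I would revisit Proposition \ref{P:pin-down}. The same partition $\mathcal{T}_n^\alpha \vee D_n^{\alpha,L} \vee \mathcal{B}_n^{\alpha,L}$ still reconstructs the approximate $X$-positions of both the pre-pivot $y_{k_p}^-$ and the pivot $p_{last} = y_{k_p}$ on the geodesic $\gamma = [o, \xi)$ up to an additive error of $2M$, via the chain in Lemma \ref{L:Chain}; moreover $p_{last} = y_{k_p}^- \cdot a_{k_p}$ for a Schottky element $a_{k_p} \in S$. For each of the $\#S$ choices of $a \in S$, applying WPD to $a$ shows that the set of $h \in G$ with both $h \cdot o$ near $y_{k_p}^- \cdot o$ and $h \cdot a \cdot o$ near $p_{last} \cdot o$ has cardinality at most $N$. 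Enumerating over the $\tfrac{4n}{\alpha}$ possible $2M$-balls on $\gamma$ and the $\#S$ choices of $a$ gives
\[
H_\xi(\mathcal{A}_n \mid \mathcal{T}_n^\alpha \vee D_n^{\alpha,L} \vee \mathcal{B}_n^{\alpha,L}) \leq \log\bigl(\tfrac{4n}{\alpha}\cdot \#S \cdot N\bigr) = o(n),
\]
which is the WPD-analogue of \eqref{E1-1}. The estimate \eqref{E2-1} is unchanged because its proof never invoked hyperbolicity of $G$; combined with Theorem \ref{T:low-entropy}, this yields $h(\partial X, \nu) = 0$ for the alternating measure $\theta = \kappa * \mu_S^{*2}$, and then Theorem \ref{T:criterion} together with Proposition \ref{P:alternate} identifies $(\partial X, \nu)$ as the Poisson boundary of $(G, \mu)$.

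The main obstacle I expect is producing the Schottky set inside the WPD sub-semigroup while preserving the constants $(\varepsilon, C, D)$ required by the pivot construction of \cite{pivot}; this requires rerunning the proof of Lemma \ref{L:Schottky} restricted to WPD elements, which is standard once one has the Bestvina--Fujiwara supply of independent WPD loxodromics but is not literally stated in \cite{pivot}. Once the WPD Schottky set is in place, the stabilizer bound $N$ is a clean substitute for the cocompact ball count, and the remainder of the proof is a mechanical transcription of Section \ref{sec:pin-down}.
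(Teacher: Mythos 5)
Your overall strategy matches the paper's: replace the finite ball $\#B(2M)$ in $G$ with a finite coarse joint stabilizer coming from the WPD condition, after upgrading the Schottky set so that each of its elements has a finite $\textup{Stab}_K(o, so)$. However, there are two genuine gaps in the details, precisely at the two places you yourself flag as expectations rather than arguments.

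First, the WPD Schottky set. You propose to "produce infinitely many independent WPD loxodromics" and then re-run the Schottky construction inside this supply, but this is not a proof, and the hypothesis only grants a single WPD element. The paper instead gives a self-contained construction: it first refines the ordinary Schottky set $S$ to one with disjoint shadows $V_C(s^+)$ and $so \in V_{C+\delta}(s^+)$ (Lemma~\ref{L:Sch-improved}), then forms $S_1 := \{\, s_1 h s_2 : s_1, s_2 \in S,\ (s_1^{-1}o, ho)_o \le C,\ (h^{-1}o, s_2 o)_o \le C \,\}$ where $h = w^P$ is a single high power of the given WPD element. One must then verify, by a concrete $\delta$-hyperbolicity computation, that $S_1$ is again $(\eta, C, D)$-Schottky, that $s_1 h s_2 \mapsto (s_1, s_2)$ is injective (this is where the disjoint-shadows upgrade is used), and that $\textup{Stab}_K(o, so)$ is finite for every $s \in S_1$; the last point uses Lemma~\ref{L:fellowtravel} (not present in your sketch) to transport the stabilizer condition from $(o, so)$ to the sub-segment $(s_1 o, s_1 h o)$ and then conjugate to $\textup{Stab}_{K_1}(o, ho)$, which is finite by WPD. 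Your "Bestvina--Fujiwara reservoir" route would have to redo all of the ping-pong from scratch while keeping the $(\varepsilon, C, D)$ constants and staying inside $\textup{supp}(\mu^{*\ell})$; the paper's composite construction sidesteps this.

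Second, the pin-down estimate. Knowing $d(o, p_{last}o)$ up to additive error $O(n/\alpha)$ together with the Schottky half-step $s$ does not by itself localize $p_{last}$ to $O(n/\alpha)$ cosets of a fixed finite subgroup; one must actually exhibit reference group elements. Moreover, since $X$ need not be proper, there is no geodesic ray $[o,\xi)$; one works with a $(1,10\delta)$-quasigeodesic ray $\gamma$, which loosens all the constants. The paper covers the parameter interval $J$ by unit subintervals $J_i$, for each picks (when possible) a reference element $g_i \in G$ with $g_i o \in N_M(\gamma(J_i))$, $g_i s o \in N_M(\gamma(J))$, with the correct orientation along $\gamma$, and then shows by a parameter bookkeeping ($t_1, t_2, u_1, u_2$) that any candidate $p'$ for $p_{last}$ satisfies $d(p'o, g_i o) \le 1 + 2M + 10\delta$ and, crucially, $d(p'so, g_i s o) \le 1 + 6M + 30\delta$. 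Thus the required stabilizer radius is $K > 6M + 1 + 30\delta$, not $4M$ as in your proposal; the extra terms come from the quasigeodesic constants and the Gromov-product arguments, and the bound on $d(p'so, g_iso)$ does not follow merely from "both are within $2M$" since the two points are independently close to $\gamma$ at a priori unrelated parameters. Without this argument the cardinality-$N$ count is not justified. You should also modify $D_n^{\alpha,L}$ to $\lfloor \sum d(p_k o, p_{k+1} o)\rfloor$ since $X$-distances need not be integers, and add the Schottky element $s$ at the last pivot as an explicit new partition $\mathcal{S}_n^\alpha$ (though enumerating over $\#S$ inside the entropy bound, as you do, is an acceptable variant since $\#S$ is fixed).
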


Let us start with a simple geometric lemma in $\delta$-hyperbolic spaces. 

\begin{lem} \label{L:fellowtravel}
Let $X$ be a $\delta$-hyperbolic metric space. Let $a$ be a point which is within distance $C$ of a geodesic segment 
$\gamma$ connecting points $x, y \in X$. If $g \in Isom(X)$ satisfies $d(x,  gx),  d(y, gy) \leq K$, then 
$d(a, ga) \leq 2C + 2 K + 4 \delta$.
\end{lem}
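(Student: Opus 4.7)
The plan is to reduce the statement to a bound on $d(a', ga')$ for a nearest point $a' \in \gamma$ to $a$. Pick $a' \in \gamma$ with $d(a, a') \leq C$; since $g$ is an isometry, $d(ga', ga) = d(a', a) \leq C$, so the triangle inequality gives
$$d(a, ga) \leq 2C + d(a', ga'),$$
and it is enough to show $d(a', ga') \leq 2K + 4\delta$.

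To bound $d(a', ga')$, I would exploit the geodesic quadrilateral $Q$ with consecutive vertices $x, y, gy, gx$. Since $X$ is $\delta$-hyperbolic, $Q$ decomposes along a diagonal into two $\delta$-thin triangles and is therefore $2\delta$-thin: each side lies in the $2\delta$-neighbourhood of the union of the other three. Applying this to the side $[x, y]$ containing $a'$, there is a point $p \in [y, gy] \cup [gy, gx] \cup [gx, x]$ with $d(a', p) \leq 2\delta$, and the argument splits into three cases according to which edge contains $p$.

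In the easy case $p \in [gx, gy]$, both $p$ and $ga'$ lie on a single geodesic, so $d(p, ga') = |d(gx, p) - d(gx, ga')|$. Using $d(gx, ga') = d(x, a')$ together with $|d(gx, p) - d(x, a')| \leq d(x, gx) + d(a', p) \leq K + 2\delta$ then yields $d(a', ga') \leq K + 4\delta$. If $p \in [y, gy]$, then $d(p, y) \leq K$, whence $d(a', y) \leq K + 2\delta$ and $d(ga', gy) = d(a', y) \leq K + 2\delta$. Routing through $p$ and $gy$ gives
$$d(a', ga') \leq d(a', p) + d(p, gy) + d(gy, ga') \leq 2\delta + K + (K + 2\delta) = 2K + 4\delta.$$
The case $p \in [gx, x]$ is symmetric: routing through $p$ and $gx$ rather than $p$ and $gy$ produces the same bound.

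The one subtlety I anticipate is the routing in the two symmetric side cases. A naive route $a' \to p \to x \to gx \to ga'$ accumulates an unwanted extra copy of $d(x, gx)$, giving the weaker bound $3K + 4\delta$. The point is that $p$ already lies on the edge $[gx, x]$, so one can go from $p$ directly to $gx$ with $d(p, gx) \leq K$ without revisiting $x$. Assembling the three cases yields $d(a', ga') \leq 2K + 4\delta$, which combined with the initial reduction completes the proof.
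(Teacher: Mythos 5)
Your proof is correct and follows essentially the same route as the paper: both pick a nearest point on $\gamma$, use $2\delta$-thinness of the quadrilateral with vertices $x, y, gy, gx$, and then case-split on which of the other three sides is $2\delta$-close to that point, taking care to route through $gx$ or $gy$ rather than backtracking through $x$ or $y$. The paper keeps $a$ (rather than $a'$) in the chain of triangle inequalities and in the $[gx,gy]$ case bounds $|t - t'|$ from both sides rather than invoking collinearity directly, but these are cosmetic differences.
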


\begin{proof}
Pick a point $ t \in [0,d(x,y)] $ such that $ d(a, \gamma (t)) \leq C $. Consider the quadrangle $ [x, gx] \cup [gx, gy] \cup [gy, y] \cup [y,x] $. Since $ X $ is $ \delta $-hyperbolic, then we can find some point $ z \in [x, gx] \cup [gx, gy] \cup [gy, y] $ such that $ d( \gamma(t),z) \leq 2 \delta $.

If $ z \in [x,gx] $, then we have 
\begin{align*} 
	d(a, ga) &\leq d(a, \gamma (t)) + d(\gamma (t), z) + d(z, gx) + d(gx, g \gamma (t)) + d(g \gamma (t), ga) \\
				   &\leq C + 2\delta + K + (2\delta + K) + C \\
				   &= 2 C  + 2K + 4\delta
\end{align*}
and likewise if $ z \in [gy,y] $.

Now suppose that $ z \in [gx, gy] $, so that $ z = g \gamma (t') $ for some $ t' \in [0, d(x,y)] $. Then we have 
\begin{align*} 
	t =  d(x, \gamma (t)) &\leq d(x, gx) + d(gx, g \gamma (t')) + d(g \gamma (t'), \gamma (t)) \\
						     &\leq K + t' + 2 \delta 
\end{align*}
so that $ t - t' \leq 2\delta + K $. Likewise, considering $ d(y, \gamma (t)) $, we have $ t' - t \leq 2\delta + K $.

Therefore 
\begin{align*} 
      d(a, ga) &\leq d(a, \gamma (t)) + d(\gamma (t), g \gamma (t')) + d(g\gamma (t'), g \gamma (t)) + d(g \gamma (t), ga) \\
					       &\leq C + 2 \delta + (2\delta + K) + C\\
					       &= 2C + K + 4\delta.
\end{align*}
Either way, we have $ d(a, ga) \leq 2C + 2K +4\delta$. 		
\end{proof}

Given a loxodromic element $g$, we denote 
$$V_C(g^+) := \{ x \in X \ : \ (x, g^+)_o \geq C \}$$ 
where $g^+ \in \partial X$ is the attracting fixed point of $g$.

By slightly modifying Gou\"ezel's proof of Proposition 3.12 \cite{pivot}, we get the following: 

\begin{lem} \label{L:Sch-improved}
Let $o \in X$ be a base point. For any $0 < \epsilon < 1$, there exists $C$ such that for any $D$ there is a finite set $S \subseteq G$ which is 
$(\epsilon, C, D)$-Schottky,  and moreover: 
\begin{enumerate}
\item 
the sets $V_C(s^+)$ for distinct values of $s \in S$ are disjoint; 

\item
the point  $s o$ belongs to $V_{C+\delta}(s^+)$ for each $s \in S$.

\end{enumerate}
\end{lem}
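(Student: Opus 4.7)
The plan is to modify Gou\"ezel's proof of Proposition 3.12 in \cite{pivot}, keeping careful track of the attracting fixed points of the loxodromic elements that appear. In his argument, the Schottky set is obtained as a collection $\{s_i := g_i^{k_i}\}$ of sufficiently high powers of finitely many independent loxodromic elements $g_1, \ldots, g_N \in G$ whose attracting and repelling fixed points in $\partial X$ are pairwise distinct. The first step is to enlarge the constant $C$ beyond what Gou\"ezel's original statement requires, so that the sets $V_C(g_i^+)$ become pairwise disjoint. Since the $g_i^+$ are pairwise distinct boundary points, any hypothetical common point $x \in V_C(g_i^+) \cap V_C(g_j^+)$ would give $(g_i^+, g_j^+)_o \geq \min((x, g_i^+)_o, (x, g_j^+)_o) - \delta \geq C - \delta$ by $\delta$-hyperbolicity; hence disjointness holds as soon as $C > \max_{i \neq j}(g_i^+, g_j^+)_o + \delta$, a finite quantity. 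Since $(g_i^{k_i})^+ = g_i^+$, property (1) is then automatic for any choice of exponents $k_i$.

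With $C$ fixed as above, the exponents $k_i$ can still be taken as large as needed. Because each $g_i$ is loxodromic, the Gromov product $(g_i^k o, g_i^+)_o$ grows linearly with $k$, so taking $k_i$ sufficiently large simultaneously yields $(s_i o, s_i^+)_o \geq C + \delta$, which is property (2), and $d(o, s_i o) \geq D$, which is the third Schottky condition. Neither of these requirements interferes with what we have arranged so far, since raising to higher powers leaves the attracting fixed points untouched and only improves the geometric alignment.

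It then remains to verify that the first two Schottky inequalities still hold for the new set $S = \{s_1, \ldots, s_N\}$ with the enlarged constant $C$. Here I would re-run Gou\"ezel's quantitative estimate: as $k_i$ grows, the image $s_i y$ is pushed arbitrarily deep into $V_{C'}(s_i^+)$, uniformly in $y$, for any $C' > C$. Hence $(x, s_i y)_o > C$ forces $x$ into a neighborhood of $s_i^+$, and by the disjointness of the $V_C(s_j^+)$'s arranged in step one, at most one index $i$ can be ``bad'' for a given $x$. So the proportion of bad indices is bounded by $1/N$, which is $\leq \epsilon$ provided $N$ was chosen large initially. The inverse inequality is symmetric: it reduces to the same argument for $s_i^{-1}$, whose attracting fixed point is $g_i^-$, and we may enlarge $C$ once more so that the $V_C(g_i^-)$ are also pairwise disjoint.

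The main obstacle I foresee is exactly in this last verification: namely, checking that Gou\"ezel's quantitative control on $(x, s_i y)_o$ can be re-run uniformly for the enlarged $C$, by correspondingly enlarging the exponents $k_i$. This reduces to the linear growth in $k$ of the relevant Gromov products along $g_i$-orbits, which is a restatement of the loxodromic property of $g_i$, so no new geometric input beyond what is already in \cite{pivot} should be needed.
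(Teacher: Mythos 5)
Your proof is correct and follows essentially the same route as the paper: the paper's entire proof consists of the sentence that one re-runs Gou\"ezel's ping-pong proof of Proposition 3.12 of \cite{pivot}, with the single modification of choosing the power $p$ large enough that (2) holds for each $s = g_i^p$ --- which is exactly your core idea of taking the exponents $k_i$ large, since $(g_i^{k}o, g_i^+)_o$ grows linearly in $k$ for a loxodromic $g_i$. The ``obstacle'' you flag at the end is in fact a non-issue: the two defining Schottky inequalities only become \emph{weaker} as $C$ increases, so a set that is $(\epsilon, C_0, D)$-Schottky for Gou\"ezel's constant $C_0$ is automatically $(\epsilon, C, D)$-Schottky for any $C \geq C_0$, and there is nothing to ``re-run'' for those two conditions; likewise increasing the $k_i$ only sharpens the ping-pong. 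The one place you are slightly looser than necessary is the disjointness constant: your chain of inequalities shows that a point of overlap forces $x \in V_{C-\delta}(g_i^+)$, so the threshold should be $C > \max_{i \neq j}(g_i^+, g_j^+)_o + 2\delta$ rather than $+\delta$; this is harmless. Note also that the paper treats (1) as essentially built into Gou\"ezel's construction, whereas you argue for it explicitly by enlarging $C$ after fixing the loxodromics $g_1, \dots, g_N$; this is a legitimate and arguably more self-contained way to guarantee (1), and it does not affect the quantifier order in the statement since the $g_i$ (hence the new $C$) depend only on $\epsilon$ and not on $D$.
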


\begin{proof}
The proof is the same as the proof of \cite[Proposition 3.12]{pivot}, using a ping pong argument; the only modification consists in taking $p$ sufficiently large such that (2) holds for each $s = g_i^p$. 
\end{proof}

\begin{lem}\label{WPDSchottky}
Suppose that $G$ has a nonelementary action on a $ \delta $-hyperbolic space $ X $ by isometries, and contains a WPD element. 
Let $o \in X$. For any $0 < \eta < 1$, there exists $C > 0$ such that for any $D > 0$ and for any $ K > 0 $ there is a finite set $S$ which is $(\eta, C, D)$-Schottky, 
and for any $s \in S$ the coarse joint stabilizer 
$\textup{Stab}_K(o, s o)$ is finite.
\end{lem}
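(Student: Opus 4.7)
The plan is to construct the Schottky set from sufficiently large powers of two independent WPD elements, so that the finite coarse joint stabilizer condition follows directly from the WPD property itself. First, I would produce two independent WPD elements $g_1, g_2 \in G$. The WPD property is preserved under conjugation via the identity $\textup{Stab}_K(hx, hg^P x) = h\,\textup{Stab}_K(x, g^P x)\, h^{-1}$, so every conjugate of the given WPD element is itself WPD. Nonelementarity of the action then guarantees that two such conjugates can be chosen with pairwise disjoint fixed-point sets on $\partial X$, giving independent loxodromic WPD elements $g_1, g_2$.

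Next, I would invoke Lemma \ref{L:Sch-improved} applied to $g_1, g_2$. For the given $\eta$ this yields the constant $C$, and for any $D$ the ping pong construction produces an $(\eta, C, D)$-Schottky set $S$ whose elements are all of the form $g_i^{\pm p}$ for a single sufficiently large power $p$ (depending on $D$).

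The key step is to use WPD to enforce the finite joint coarse stabilizer condition. The claim is: for any WPD element $g$ and any $K > 0$, $\textup{Stab}_K(o, g^p o)$ is finite for all sufficiently large $p$. To see this, let $C_0$ be a Morse-type constant such that $g^j o$ lies within $C_0$ of the geodesic $[o, g^p o]$ for all $0 \leq j \leq p$, and set $K' := 2C_0 + 2K + 4\delta$. By WPD applied to $g$ with constant $K'$, there exists $J$ such that $\textup{Stab}_{K'}(o, g^J o)$ is finite. For any $p \geq J$, Lemma \ref{L:fellowtravel} applied with $a := g^J o$ forces any element of $\textup{Stab}_K(o, g^p o)$ into $\textup{Stab}_{K'}(o, g^J o)$, which is finite. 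Applied to $g_1$ and $g_2$ with the given $K$, this yields a threshold $P_0(K)$; choosing the Schottky power $p$ at least $\max(p(D), P_0(K))$ produces the desired set $S$.

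The main obstacle is this last step: reducing the finite joint coarse stabilizer condition at high powers to the single-power WPD definition requires a careful application of hyperbolic geometry along the quasi-axis of a WPD element, concretely via Lemma \ref{L:fellowtravel} combined with the Morse property for loxodromic orbits. Once this reduction is in hand, the construction is otherwise identical to that of Lemma \ref{L:Sch-improved}, with the only new input being the choice of the WPD elements $g_1, g_2$ at the start.
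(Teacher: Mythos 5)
Your approach is genuinely different from the paper's. You build the Schottky set directly out of high powers of WPD elements and then control $\textup{Stab}_K(o, g^p o)$ by combining the Morse property of the quasi-axis with Lemma~\ref{L:fellowtravel} and the WPD definition at a single power $g^J$. That reduction is correct and is a nice use of the same geometric lemma that the paper uses, just applied along the axis of a single element rather than along a middle block. The paper instead keeps the Schottky set $S$ from Lemma~\ref{L:Sch-improved} as a black box and sandwiches one WPD power $h=w^P$ in the middle, setting $S_1:=\{s_1hs_2\}$ and then re-verifying the Schottky inequalities for $S_1$; the joint stabilizer of $(o, s_1 h s_2 o)$ is pushed into $s_1\,\textup{Stab}_{K_1}(o,ho)\,s_1^{-1}$ by the same Lemma~\ref{L:fellowtravel}. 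The advantage of the paper's construction is that it is agnostic to how $S$ was built, which is exactly what makes Corollary~\ref{WPDSchottky2} (the version with $S\subset\mathrm{supp}\,\mu^{*N}$) follow immediately: one just takes $S$ in $\mathrm{supp}\,\mu^{*\ell}$ from Lemma~\ref{L:Schottky} and one $h$ in $\mathrm{supp}\,\mu^{*m}$, giving $S_1\subset\mathrm{supp}\,\mu^{*(2\ell+m)}$.

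There is a quantitative gap in your construction as written. You take only two independent WPD elements $g_1,g_2$ and propose $S=\{g_1^{\pm p},g_2^{\pm p}\}$, a set of four elements. The Schottky condition requires at least $(1-\eta)\#S$ good elements for every pair $(x,y)$; with four elements whose attracting/repelling points are $\{g_1^{\pm},g_2^{\pm}\}$, one can choose $x$ near $g_1^+$ and $y$ near $g_2^-$ to make two of the four elements bad, so $\eta\geq 1/2$ is forced and the statement fails for small $\eta$. To fix this you need $N$ pairwise independent loxodromic WPD elements with $N\gtrsim 1/\eta$ (easily obtained by conjugating a single WPD element, since WPD passes to conjugates and non-elementarity gives enough independent conjugates) and then $S=\{g_i^{p}\}_{i\leq N}$. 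A second, more minor point: Lemma~\ref{L:Sch-improved} as stated does not take a prescribed list of loxodromics as input, so you cannot literally "apply it to $g_1,g_2$"; you would have to re-run the ping-pong argument of \cite[Proposition 3.12]{pivot} with your chosen WPD conjugates, which is reasonable but should be said. Finally, if you also want Corollary~\ref{WPDSchottky2} to follow, you would need all the $g_i^p$ to lie in a common $\mathrm{supp}\,\mu^{*N}$, which requires an extra argument in your approach but is automatic in the paper's.
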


\begin{proof}
Note that a Schottky set for some $D$ is also a Schottky set for any smaller $D$, so we can assume that $D > 2 C + a \delta$, where 
$a$ is a fixed constant whose value can be computed by looking at the details of the following proof. To avoid keeping track of it, 
we write $O(\delta)$ to mean a constant that only depends on the hyperbolicity constant $\delta$ of $X$.

Let $w$ be a WPD element. By the WPD property, there exists some $P \geq 1$ and $h = w^P$ such that 
$$d(o, h o) \geq D$$ 
and 
$$\#\text{Stab}_{K_1}(o, h o) < \infty$$
with $K_1 = 2 K + 2 C + 12 \delta$.

By Lemma \ref{L:Sch-improved}, we can find a finite set $S$ which is $(\epsilon, C, D)$-Schottky and satisfies (1) and (2). 
Let us consider the set 
$$S_1 := \{ s_1 h s_2 \ : \ s_1 \in S, s_2 \in S, (s_1^{-1} o, h o)_{o} \leq C, (h^{-1} o, s_2 o)_{o} \leq C  \}.$$
First, we claim that, if we choose $D$ large enough, any $s = s_1 h s_2 \in S_1$ satisfies
$$d(o, s_1 h s_2 o) \geq D.$$
In fact, the sequence $o, s_1 o, s_1 h o, s_1 h s_2 o$ is a $(C, D)$-chain, hence 
by Lemma \ref{L:canoe} for any geodesic segment $\gamma$ joining $o$ and $s_1 h s_2 o$, we obtain
$$\max \{ d(s_1 o,  \gamma), d(s_1 h o,  \gamma)\}  \leq C + 4 \delta.$$
Moreover, by looking at the corresponding approximate tree we obtain 
\begin{align*}
d(o, s_1 h s_2 o) & =  d(o, s_1 o) + d(s_1 o, s_1 h o) + d(s_1 h o, s_1 h s_2 o) - (o, s_1 h o)_{s_1 o} - (s_1 o, s_1 h s_2 o)_{s_1 h o} + O(\delta) \\
&  \geq 3 D - 4 C + O(\delta) \geq D
\end{align*}
if $D \geq 2 C + O(\delta)$. 

Second, we claim that the cardinality of $S_1$ satisfies 
\begin{equation} \label{E:cardinality}
\# S_1 \geq (1- \epsilon)^2  (\#S)^2.
\end{equation}
Note that by definition of Schottky set, $(o, s_1 h o)_{s_1 o} = (s_1^{-1} o, h o)_{o} \leq C$ for at least $(1- \epsilon)  \# S$ values of $s_1$. 
Similarly $(s_1 o, s_1 h s_2 o)_{s_1 h o} = (h^{-1} o, s_2 o)_{o}\leq C$ for at least $(1- \epsilon) \# S $ values of $s_2$. 

Now, fix $s_1$ and suppose that there exists $s_2, t_1, t_2$ in $S$ such that $s_1 h s_2 = t_1 h t_2$. We want to show that $s_ 1= t_1$, 
$s_2 = t_2$, which yields \eqref{E:cardinality}.
Then by $\delta$-hyperbolicity we have, as above, that both $s_1 o$ and $t_1 o$ lie with distance at most $C +  4 \delta $ of any geodesic $\gamma$
between $o$ and $s_1 h s_2 o = t_1 h t_2 o$. 
Thus, both geodesic segments $[o, s_1 o]$ and $[o, t_1 o]$ have length at least $D$ and fellow travel $\gamma$ for a distance at least $D - C - O(\delta)$, which implies that
$$(s_1 o, t_1 o)_o \geq D - C - O(\delta) \geq C + \delta$$
as soon as $D$ is large enough.  
This also implies that 
$$(t_1 o, s_1^+)_o \geq \min \{ (s_1^+, s_1 o)_o, (s_1 o, t_1 o)_o \} - \delta \geq C$$
hence $t_1 o \in V_C(s_1)$. 
However, since $(t_1 o, t_1^+)_o \geq C$ by property (2) and definition of $V_C$, we have 
$$t_1 o \in V_C(t_1^+) \cap V_C(s_1^+)$$
which implies $t_1 = s_1$ since the $V_C(s^+)$ for distinct $s \in S$ are disjoint. 
Then the relation $s_1 h s_2 = t_1 h t_2$ also implies $s_2 = t_2$. 

Now, we prove that for any $x, y \in X$
$$\# \{ s \in S_1 \ : \ (x, s ^{\pm} y)_o \leq C \} \geq (1-\epsilon) ) (\# S )^2.$$

Let $x, y \in X$. Then by definition of Schottky set, for any $s_2 \in S$ the number of $s_1 \in S$ such that  
$$(x, s_1 h s_2 y)_o \geq C$$
is at most $\epsilon  \# S$. Hence, the number of $s = s_1 h s_2 \in S_1$ such that $(x, s y)_o \geq C$
is at most 
$$\epsilon (\# S)^2 \leq \frac{\epsilon}{(1-\epsilon)^2} \# S_1.$$

Also by definition of Schottky set, for any $s_1 \in S$ the number of $s_2 \in S$ such that  
$$(x, s_2 ^{-1} h ^{-1} s_1 ^{-1} y)_o \geq C$$
is at most $\epsilon  \# S$. Hence, the number of $s = s_1 h s_2 \in S_1$ such that $(x, s ^{-1} y)_o \geq C$
is at most 
$$\epsilon (\# S)^2 \leq \frac{\epsilon}{(1-\epsilon)^2} \# S_1.$$
Thus, the set $S_1$ is $(\eta, C, D)$-Schottky if we set $\eta = \frac{2\epsilon}{(1-\epsilon)^2}$.

Now, let us check the coarse joint stabilizer. Let $s = s_1 h s _2$, and suppose $g$ belongs to $\textup{Stab}_K(o, s o)$.

Then, since $s_1 o$ and $s_1 h o$ lie within distance $C  + 4 \delta$ of any geodesic segment $\gamma = [o, so]$, then by Lemma \ref{L:fellowtravel} 
we have 
$$d(s_1 o, g s_1 o) \leq 2 C + 2 K + 12 \delta, \qquad d(s_1 h o, g s_1 h o) \leq 2 C + 2 K + 12 \delta$$
thus $g$ belongs to $\textup{Stab}_{K_1}(s_1 o, s_1 h o)$ with $K_1 = 2 C + 2 K  +  12 \delta$. 
Since $\textup{Stab}_{K_1}(s_1 o, s_1 h o)  = s_1 \textup{Stab}_{K_1}(o, h o) s_1^{-1}$ is finite by the WPD property, 
the claim follows. 

\end{proof}

From the previous lemma we immediately get: 

\begin{coro} \label{WPDSchottky2}
	Let $ \mu $ be a non-elementary measure on a countable set of isometries of a $ \delta $-hyperbolic space $ X $ such that the semigroup generated by the support of $ \mu $ contains a WPD element. For all $ \eta > 0 $, there exists $ C > 0 $ such that, for all $ D > 0 $ and $ K > 0 $, there exists $ N > 0 $ and an $(\eta, C, D)-$Schottky set $ S $ in the support of $ \mu ^{*N} $ with the property that for any $s \in S$ the coarse joint stabilizer 
$\textup{Stab}_K(o, s o)$ is finite.
\end{coro}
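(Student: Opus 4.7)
The plan is to revisit the construction in the proof of Lemma~\ref{WPDSchottky}, while tracking at each step the fact that the elements involved must lie in the support of some convolution power of $\mu$.

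First I would apply Lemma~\ref{L:Schottky} to the non-elementary measure $\mu$ to produce a Schottky set $S_0$ contained in the support of $\mu^{*\ell}$ for some $\ell$. Since the construction of Lemma~\ref{L:Schottky}, which in turn is based on Gou\"ezel's ping-pong construction (\cite[Proposition 3.12]{pivot}), realises the Schottky generators as sufficiently high powers of two independent loxodromic elements, and since non-elementarity of $\mu$ guarantees two such independent loxodromic elements in the support of $\mu^{*m}$ for some $m$, one can also enforce the additional geometric properties of Lemma~\ref{L:Sch-improved}: the cones $V_C(s^+)$ for distinct $s \in S_0$ are disjoint, and $s o \in V_{C+\delta}(s^+)$ for each $s \in S_0$. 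All of this can be arranged without leaving the support of $\mu^{*\ell}$, for $\ell$ chosen sufficiently large.

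Next, since the semigroup generated by the support of $\mu$ contains a WPD element $w$, we may write $w$ as a product of $k$ elements of the support of $\mu$, whence $h := w^P$ lies in the support of $\mu^{*kP}$ for every $P \geq 1$. Using the WPD property of $w$, I would choose $P$ large enough that $d(o, ho) \geq D$ and the coarse stabilizer $\textup{Stab}_{K_1}(o, ho)$ is finite, where $K_1 := 2K + 2C + 12\delta$, exactly as in the proof of Lemma~\ref{WPDSchottky}. Finally I would define
\[ S := \{\, s_1 h s_2 \,:\, s_1, s_2 \in S_0,\ (s_1^{-1}o, ho)_o \leq C,\ (h^{-1}o, s_2 o)_o \leq C \,\}, \]
so that every element of $S$ lies in the support of $\mu^{*N}$ for $N := 2\ell + kP$. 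The verification that $S$ is $(\eta, C, D)$-Schottky with $\eta = 2\varepsilon'/(1-\varepsilon')^2$ (where $\varepsilon'$ is the Schottky parameter of $S_0$), and that every coarse joint stabilizer $\textup{Stab}_K(o, so)$ is finite, is carried out by literally repeating the corresponding calculations in the proof of Lemma~\ref{WPDSchottky}, which use only the geometric properties of $S_0$ and of $h$ and not the group-theoretic origin of the elements.

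The main subtlety is in the first step, namely enforcing the refinements of Lemma~\ref{L:Sch-improved} within the support of $\mu^{*\ell}$. This amounts to opening up Gou\"ezel's ping-pong construction and verifying that the high-power choices ensuring disjointness of the cones $V_C(s^+)$ can be made while keeping the resulting elements in the semigroup generated by the support of $\mu$; once this is settled, the rest of the argument is essentially a copy of the proof of Lemma~\ref{WPDSchottky}, and the corollary follows.
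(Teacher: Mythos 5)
Your approach is correct and matches the paper's, which derives the corollary as immediate from Lemma~\ref{WPDSchottky}: the point is precisely that every ingredient in that lemma's proof lies in the support of a convolution power of $\mu$ (the Schottky set via Lemma~\ref{L:Schottky}, the power $h = w^P$ because the WPD element is a $k$-fold product of elements of $\operatorname{supp}\mu$), so the product set $S = \{s_1 h s_2\}$ lands in $\operatorname{supp}\mu^{*N}$ for $N = 2\ell + kP$. You also correctly flag the one subtlety the paper leaves implicit, namely that Lemma~\ref{L:Sch-improved} is stated without the $\mu^{*\ell}$-support conclusion, so one must check that its refinement of the ping-pong construction stays compatible with Gou\"ezel's convolution-power statement.
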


To prepare for the next proof, let us recall that in a hyperbolic metric space, any two points $o \in X$ and $\xi \in \partial X$ are connected by a $(1, 10 \delta)$-quasigeodesic ray (if the space is not proper, they may not be connected 
by a geodesic ray). 
Now, recall that from Lemma \ref{L:Chain} and the discussion after it, if we pick an $(\epsilon, C, D)$-Schottky set, then for any pivotal time from infinity $ k $, the points $ y _{k} ^{-}, y _{k} $ are within a $M$-neighbourhood of any $(1, 10 \delta)$-quasigeodesic $\gamma$ joining $o$ and the limit point $ \xi $, where $M = M(C, \delta)$ depends only on $ C $ and $ \delta $
(note that the constant $M$ may be slightly larger than the one in Eq. \eqref{E:fellow-t}). Also from Lemma \ref{L:Chain}, the points $ y _{k} ^{-},y _{k} $ are oriented in the ``right way" along $ \gamma$, 
namely any closest point projection of $y_{k}^{-}$ onto $\gamma$ lies between $o$ and any closest point projection of $y_k$ onto $\gamma$.

 \begin{proof}[Proof of Theorem \ref{T:WPD-action}]
Let $o \in X$ be a base point. From now on, we fix some $ (1/100, C, D)-$Schottky set $ S $ from Corollary \ref{WPDSchottky2} with $ D, K $ sufficiently large. For our purposes, it suffices to take $ D > 100C+100\delta+1 $ and $ K > 6 M + 1 + 30 \delta$ where $M = M(C, \delta)$ is as in the previous paragraph.

By \cite{Maher-Tiozzo}, the random walk $(w_n o)$ converges to $\partial X$ almost surely, and $(\partial X, \nu)$ is a $\mu$-boundary. Since by \cite{pivot}, pivots still exist and are abundant, the argument goes through in most parts, except for the following. First, we need to modify the definition of good distance to
  $$D_n^{\alpha,L} := \left\lfloor \sum_{\stackrel{I_{k, \alpha}, I_{k+1, \alpha} }{\textup{ $L$-good}}} d(p_k o, p_{k+1} o) \right\rfloor$$
 by adding the integer parts, and replacing the word metric by the distance $d$ on $X$.
The biggest modification in the argument is in Proposition \ref{P:pin-down}. 

We add one more partition as follows. At the final pivot, record the pivotal element $ s = (y _{k} ^{-}) ^{-1} y _{k} \in S $. If no pivots exist, record the identity element $ e $. Denote this partition by $ \mathcal{S} ^{\alpha}_{n} $. Since $ S$ is finite, this partition has entropy at most $ \log \left(\# S +1\right) $. \\

Now, let us condition our walk on $\mathcal{T}_n^\alpha \vee  D_n^{\alpha,L}  \vee \mathcal{B}_n^{\alpha,L} \vee \mathcal{S} ^{\alpha} _{n}$. 
We need to estimate the relative entropy
$$H_\xi(\mathcal{A}_n \vert  \mathcal{T}_n^\alpha \vee  D_n^{\alpha,L}  \vee \mathcal{B}_n^{\alpha,L}  \vee \mathcal{S} ^{\alpha} _{n}).$$

As in the proof of Proposition \ref{P:pin-down}, 	we know the distance between $ o $ and $ p _{last} o $ up to $ \frac{2 Mn}{\alpha} $, where $ M $ depends only on $ \delta $ and $ C $. 
We also know that $ p _{last} s o $ lies within distance $ M $ of any $(1, 10 \delta)$-quasigeodesic $\gamma$ connecting $o$ to $\xi$. 

Let us pick a parameterization $\gamma : [0, \infty) \to X$ of such a quasigeodesic. Since we know the distance between $ o $ and $ p _{last} o $ up to $ \frac{2 Mn}{\alpha}$, 
if we set $\Delta := \frac{2 Mn}{\alpha} +M + 1 +  10 \delta$, 
we know a parameter $t$ such that the interval $J := [t - \Delta, t + \Delta]$
is such that the $M$-neighbourhood $ N _{M} (\gamma(J)) $ contains all possible choices for $ p _{last} o $.

	Now cover $ J $ by $ \lceil  2 \Delta \rceil $ intervals of length $ 1 $, and for each interval $ J _{i}  $ find a group element $ g_i $ such that the point $ g_io $ is in $ N _{M} (\gamma(J _{i})) $, the point 
	$ g_i s o $ is contained in $ N _{M} ( \gamma(J)) $, and the projection of $ g_i  o $ onto $ \gamma $ lies between $o$ and the projection of $ g_i  s o $,
	if such a $ g _{i} $ exists. 
	
	Now for any choice $ p' $ for $ p _{last} $, we can find an interval $ J _{i} $ such that $ p'o \in N _{M} (\gamma(J _{i})) $. 
	Denote as $\pi$ a choice of a closest point projection onto $\gamma$, and let $\pi_i$ a choice of closest point projection onto $\gamma(J_i)$.
	Then 
	$$ d(p'o, g _{i}o) \leq d(\pi_i(p'o), \pi_i(g _{i}o)) + 2M \leq 1 + 10 \delta + 2M.$$ 
	We claim that $d(p'so, g_iso) \leq 6 M + 1 + 30 \delta$.
	
	To prove it, let $t_1, t_2, u_1, u_2$ be such that 
	$$\gamma(t_1) = \pi_i(p' o), \gamma(t_2) = \pi(p' s o), \gamma(u_1) = \pi_i(g_i o), \gamma(u_2) = \pi(g_i s o).$$	
	Note that $d(g_i o, g_i s o) = d(p' o, p' s o) = d(o, so)$ and, since the points $p' o, p' s o, g_i o$ and $g_i s o$ lie within distance $M$ of $\gamma$, 
	we have 
	$$|(t_2 - t_1) - d(o, so) | \leq |d(\pi_i(p' o), \pi(p' s o)) - d(o, so) | + 10 \delta \leq 2 M + 10 \delta$$
	hence similarly  
	$$ |(u_2 - u_1) - d(o, so) | \leq 2 M + 10 \delta$$
	since the projections of the segments $[p' o, p' so]$ and $[g_ i, g_i s o]$ have the same orientation, 
	that is $\max \{ t_1, u_1 \} < \min \{t_2, u_2\}$, 
	we also obtain 
		\begin{align*}
	|t_2 - u_2| & \leq |t_1 - u_1| + \left| |t_1 - t_2| - |u_1 - u_2| \right| \\
	& \leq 1 + 4 M + 20 \delta 
\end{align*}
so, by definition of quasigeodesic, 
$$d( \pi(p' s o), \pi(g_i s o)) \leq 1 + 4 M + 30 \delta.$$
Finally, we get
	\begin{align*}
d(p'so, g_iso)&  \leq  d( \pi(p' s o), \pi(g_i s o)) + 2 M \\
& \leq 1 + 6 M + 30 \delta.
\end{align*}
	Since $ K \geq 6 M  + 1 + 30 \delta$ then $\# \text{Stab} _{K} (o, so) = R < \infty$. Since we chose at most 
	$\lceil 2 \Delta \rceil \leq \frac{4 Mn}{\alpha} + 2 M + 3 +  20 \delta$ values of $ g _{i} $'s we have 
	$$H_\xi(\mathcal{A}_n \vert  \mathcal{T}_n^\alpha \vee  D_n^{\alpha,L}  \vee \mathcal{B}_n^{\alpha,L} \vee \mathcal{S} ^{\alpha} _{n}) \leq 
	\log( 4 R Mn / \alpha + (2 M + 3 +  20 \delta) R )$$
which, by dividing by $n$ and taking the limit as $n \to \infty$, implies the claim. 
\end{proof}

\bibliographystyle{amsalpha}
\bibliography{ref}

\end{document}